\documentclass{amsart}

\usepackage[utf8]{inputenc}
\usepackage{amsfonts}
\usepackage{amsmath}
\usepackage{amssymb}
\usepackage[english]{babel}
\usepackage{hyperref}

\newtheorem{theorem}{Theorem}
\newtheorem{lemma}{Lemma}
\newtheorem{proposition}{Proposition}
\newtheorem{corollary}{Corollary}
\newtheorem{definition}{Definition}

\newtheorem{example}{Example}

\DeclareMathOperator{\ann}{Ann}

\begin{document}

\title{The Coproduct of Ideals and the Coprime Spectrum}

\author{Frank Murphy-Hernandez}
\address{Facultad de Ciencias, UNAM, Mexico City}
\email{murphy@ciencias.unam.mx}

\author{Eduardo Leon-Rodriguez}
\address{Instituto de Matem\'aticas, UNAM, Mexico City}
\email{eduardoleon\_r@ciencias.unam.mx}

\subjclass{Primary 13A15, 13C05; Secondary 06D20, 18A22, 16D80}

\date{\today}

\keywords{coproduct of ideals, coprime ideal, coprime spectrum, ideal quotient, annihilator, dual ring, Heyting algebra, functoriality}

\begin{abstract}
We introduce the coproduct of ideals and the notion of coprime ideal, extending the Heyting-algebra perspective on ideal lattices to arbitrary commutative rings. The resulting coprime spectrum is a topological space, defines a covariant functor on epimorphisms of commutative rings that do not annihilate coprime ideals, classifies fields among integral domains, and in dual rings is homeomorphic to the prime spectrum. This offers an elementary, lattice-theoretic counterpart to the geometric study of nilpotents.
\end{abstract}

\maketitle

\section{Introduction}

In a commutative ring $A$, the set of ideals $\mathcal{I}(A)$ carries a rich lattice structure under inclusion, with intersection as meet and sum as join. When $A$ is Artinian semisimple, $\mathcal{I}(A)$ is in fact a Boolean algebra: the product and the intersection of ideals coincide, and the complement of an ideal $I$ is given by its annihilator $\ann(I)$. In the general case this perfect picture breaks down. The annihilator $\ann(I)$ is no longer a complement but a pseudocomplement: it is the largest ideal $K$ such that $IK=0$. The product, moreover, no longer coincides with the intersection, so it cannot serve as the meet of the lattice, and the ideal lattice of a general ring is far from Boolean. Nevertheless, the analogy with Boolean algebras suggests a fruitful line of inquiry, one that replaces the product by an operation better suited to the annihilator.

The starting point is the ideal quotient $(I:J)=\{a\in A\mid aJ\subseteq I\}$, which is the largest ideal $K$ such that $KJ\subseteq I$. In the language of Heyting algebras, $(I:J)$ plays the role of the implication $J\to I$. In a Boolean algebra $B$ the join is recovered from the pseudocomplement and the implication by the identity
\[
x\vee y = x^{*}\to y \qquad (x,y\in B),
\]
where $x^{*}$ denotes the complement of $x$. It is therefore natural to ask whether the sum of ideals can be recovered from the annihilator and the ideal quotient. In an Artinian semisimple ring this is indeed the case: for all ideals $I,J\subseteq A$,
\[
I+J = (J:\ann(I)).
\]
This observation motivates the central definition of the present work: the \emph{coproduct} of ideals
\[
[I:J] := (J:\ann(I)) = \{\,a\in A\mid a\cdot\ann(I)\subseteq J\,\}.
\]
The coproduct is thus a fourth operation on ideals, alongside the product, the intersection, and the sum, and it is tailored to interact with the annihilator. In an Artinian semisimple ring the sum and the coproduct coincide, so the coproduct may be regarded as a generalisation of the sum to arbitrary rings, one that remains well behaved in the presence of nilpotents and non-trivial annihilators. In this sense the present work extends the classical Heyting-algebra perspective on ideal lattices, where the ideal quotient provides the implication, to the ideal lattice of an arbitrary commutative ring, by adding a dual operation that plays the role of the disjunction.

Classical commutative algebra has traditionally favoured reduced rings, where nilpotent elements are absent and annihilators are often trivial. Yet nilpotents and, more generally, non-zero annihilators carry genuine algebraic information, information that Grothendieck's scheme theory incorporates through structure sheaves. The present article takes a more elementary, though philosophically aligned, approach: instead of working in the category of schemes, we develop a purely commutative-algebraic framework that exploits non-zero annihilators and the coproduct operation. This allows us to study phenomena that are invisible in the reduced setting, remaining entirely within the language of ideals, quotients, and annihilators, without invoking sheaf theory. The theory developed here may therefore be regarded as an alternative and parallel route to the geometric insights of scheme theory, one that privileges the lattice of ideals over the structure sheaf, and the annihilator over the nilpotent thickening.

The results below are stated for arbitrary commutative rings, but the theory is particularly well behaved in two regimes: dual rings, where it recovers the prime spectrum, and rings with non-trivial annihilators, where it produces finite spaces such as the Sierpi\'nski space.

The main contributions of this work are the following. We introduce the coproduct $[I:J]=(J:\ann(I))$ and establish its elementary theory, including its comparison with the classical operations on ideals and the chain of inclusions
\[
IJ\subseteq I\cap J\subseteq I+J\subseteq [I:J].
\]
We highlight the duality between the product and the coproduct: for every ideal $I$,
\[
I\ann(I)=0 \qquad\text{and}\qquad [I:\ann(I)]=A,
\]
so that the annihilator behaves as a complement with respect to both operations. Building on this, we define an ideal $C\neq 0$ to be \emph{coprime} if $C\subseteq [I:J]$ implies $C\subseteq I$ or $C\subseteq J$, mirroring the definition of prime ideals with respect to the product. We give an operational characterisation of coprimality, prove that it is stable under sums, and show that every idempotent minimal ideal is coprime, while minimal nilpotent ideals need not be. This leads us to construct the coprime spectrum $\operatorname{CoSp}(A)$ as the set of all coprime ideals, endowed with a topology whose closed sets are
\[
V_c(I)=\{\,C\in\operatorname{CoSp}(A)\mid C\subseteq I\,\}.
\]
We prove that this indeed defines a topological space, and we show that the assignment $A\mapsto \operatorname{CoSp}(A)$ is functorial with respect to epimorphisms of commutative rings that do not annihilate coprime ideals: every such morphism $f\colon A\to B$ induces a continuous map $\operatorname{CoSp}(f)\colon \operatorname{CoSp}(A)\to \operatorname{CoSp}(B)$.

The coprime spectrum turns out to classify familiar algebraic objects. In the special case of integral domains we completely classify coprime ideals: such an ideal, if it exists, is the least non-zero ideal, and its existence forces the domain to be a field. In particular, for an integral domain $D$ we have
\[
|\operatorname{CoSp}(D)|\le 1,
\]
with equality if and only if $D$ is a field. Thus the coprime spectrum detects fields among integral domains, and more generally it provides a topological invariant that distinguishes reduced from non-reduced behaviour. For rings satisfying the double annihilator condition $\ann(\ann(I))=I$ --- the \emph{dual rings}, which include all commutative quasi-Frobenius rings --- we prove that an ideal $C$ is coprime if and only if its annihilator $\ann(C)$ is prime. Consequently the annihilator map induces a bijection
\[
\ann\colon \operatorname{CoSp}(A)\longrightarrow \operatorname{Spec}(A),
\]
which we show to be a homeomorphism when $\operatorname{CoSp}(A)$ is endowed with the topology above and $\operatorname{Spec}(A)$ with the Zariski topology. This places coprimality and primality in perfect correspondence and shows that, in the dual setting, the coprime spectrum carries exactly the same topological information as the prime spectrum.

Finally, we illustrate the theory with explicit computations in rings with zero divisors. In particular, we compute the coprime spectrum of the trivial extension $\mathbb{Z}\ltimes \mathbb{Z}_{p^\infty}$ of the integers by the Pr\"ufer $p$-group, and we show that it is homeomorphic to the Sierpi\'nski space. This example, together with the pathological behaviour exhibited by the ring $k[x,y]/(x^2,y^2,xy)$, demonstrates that the coproduct captures meaningful algebraic information that is invisible in the reduced setting, and reinforces the view that the coprime spectrum is a natural companion to the prime spectrum in the study of rings with non-trivial annihilators.

The structure of the paper is as follows. Section~\ref{sec:prelim} recalls the standard identities for ideal quotients and annihilators, together with the basic facts about trivial extensions and the Pr\"ufer $p$-group that will be needed later. Section~\ref{sec:coproduct} introduces the coproduct of ideals and establishes its elementary properties, including its relation to the sum of ideals and its partial associativity. Section~\ref{sec:coprime-ideal} defines coprime ideals, provides their main characterisations, and exhibits examples of idempotent minimal ideals that are coprime as well as nilpotent minimal ideals that are not. Section~\ref{sec:coprime-spectrum} constructs the coprime spectrum as a topological space, studies its behaviour under epimorphisms of commutative rings, and shows that it defines a covariant functor. Section~\ref{sec:dual-rings} focuses on dual rings, where the coprime spectrum and the prime spectrum are shown to be homeomorphic via the annihilator map.

\section{Preliminaries}
\label{sec:prelim}

In this paper all rings are unitary commutative rings. For a ring $A$ and ideals $I,J\subseteq A$, the \emph{ideal quotient} $(I:J)$ is
\[
(I:J)=\{a\in A\mid aJ\subseteq I\}.
\]
The \emph{annihilator} of $I$ is
\[
\ann(I)=\{a\in A\mid aI=0\}.
\]
Standard references are \cite{atiyah2018introduction} and \cite{greuel2008singular}. We recall some standard identities concerning the ideal quotient and the annihilator, which will be useful in the sequel.

\begin{proposition}\label{pro:quotient}
Let $A$ be a ring, $J,K,L\subseteq A$, and $\{J_i\}_{i\in\Lambda}$ a family of ideals of $A$. Then:
\begin{enumerate}
  \item $J\subseteq (J:K)$;
  \item $(J:K)=A$ if and only if $K\subseteq J$;
  \item $\bigl((J:K):L\bigr)=(J:KL)$;
  \item $\displaystyle\Bigl(\bigcap_{i\in\Lambda} J_i:K\Bigr)=\bigcap_{i\in\Lambda}(J_i:K)$;
  \item $K\,(J:K)\subseteq J$.
\end{enumerate}
\end{proposition}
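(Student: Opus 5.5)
All five items follow directly from the definition $(J:K)=\{a\in A\mid aK\subseteq J\}$ by element-chasing, using only that $J$ is an ideal. I would treat them in the order stated, with no induction and no earlier results needed.

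For (1), take $a\in J$. Then $aK\subseteq J$ because $J$ absorbs multiplication by elements of $A$, so $a\in (J:K)$.

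For (2), suppose first that $(J:K)=A$. Then $1\in (J:K)$, so $K=1\cdot K\subseteq J$. Conversely, if $K\subseteq J$, then for every $a\in A$ we have $aK\subseteq aJ\subseteq J$, hence $(J:K)=A$.

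For (5), $K(J:K)$ is generated by products $ka$ with $k\in K$ and $a\in(J:K)$. Each such product lies in $aK\subseteq J$. Since $J$ is closed under finite sums, $K(J:K)\subseteq J$.

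For (3), take $a\in A$. Membership $a\in\bigl((J:K):L\bigr)$ means that $al\in(J:K)$ for all $l\in L$, that is, $alk\in J$ for all $l\in L$ and $k\in K$. Membership $a\in(J:KL)$ means $aKL\subseteq J$. Since $KL$ is spanned additively by the products $kl$ and $J$ is closed under sums, $aKL\subseteq J$ holds if and only if $a(kl)\in J$ for all generators $kl$. Commutativity then matches the two conditions.

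For (4), $a\in\bigl(\bigcap_i J_i:K\bigr)$ means $aK\subseteq\bigcap_i J_i$. This is equivalent to $aK\subseteq J_i$ for every $i$, that is, $a\in\bigcap_i (J_i:K)$. If $\Lambda=\emptyset$, both sides are $A$, using the convention $\bigcap_{\emptyset}=A$.

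The only step that needs any care is (3): one must reduce the condition on the ideal product $KL$ to a condition on its generators $kl$. This reduction is valid because $J$ is closed under addition.
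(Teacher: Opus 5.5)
Your proposal is correct: each item follows from the definition of $(J:K)$, and you handle the one point needing care in (3), reducing $aKL\subseteq J$ to the generators $kl$ by using that $J$ is closed under addition. The paper gives no proof of its own and simply recalls these as standard identities, citing Atiyah--Macdonald, where they are left as exactly this kind of routine element-wise verification, so your argument is essentially the intended one.
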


\begin{proposition}\label{pro:annihilator}
Let $A$ be a ring, $I,J\subseteq A$, and $\{I_i\}_{i\in\Lambda}$ a family of ideals of $A$. Then:
\begin{enumerate}
  \item $\ann(I)=(0:I)$;
  \item if $I\subseteq J$, then $\ann(J)\subseteq\ann(I)$;
  \item $I\subseteq\ann(\ann(I))$ and $\ann(I)=\ann(\ann(\ann(I)))$;
  \item $\displaystyle\ann\Bigl(\sum_{i\in\Lambda} I_i\Bigr)=\bigcap_{i\in\Lambda}\ann(I_i)$;
  \item $\ann(IJ)=(\ann(I):J)=(\ann(J):I)$.
\end{enumerate}
\end{proposition}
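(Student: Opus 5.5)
The plan is to derive every item directly from the definitions $\ann(I)=\{a\in A\mid aI=0\}$ and $(I:J)=\{a\in A\mid aJ\subseteq I\}$, using Proposition~\ref{pro:quotient} wherever it shortens the argument. Item (1) holds by definition: $a\in\ann(I)$ iff $aI=0$, that is, iff $aI\subseteq 0$. For item (2), if $I\subseteq J$ and $aJ=0$, then $aI\subseteq aJ=0$.

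For item (3), the inclusion $I\subseteq\ann(\ann(I))$ comes from commutativity. Indeed, for $x\in I$ and $a\in\ann(I)$ we have $xa=ax=0$, so $x$ annihilates $\ann(I)$. For the second claim, apply this inclusion to the ideal $\ann(I)$ to get $\ann(I)\subseteq\ann(\ann(\ann(I)))$. For the reverse inclusion, apply the order reversal of item (2) to $I\subseteq\ann(\ann(I))$, which gives $\ann(\ann(\ann(I)))\subseteq\ann(I)$. This Galois-connection argument is the only step that is more than unwinding definitions, and it is short.

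For item (4), an element $a$ satisfies $a\sum_i I_i=0$ iff $ax=0$ for every finite sum $x$ of elements taken from the $I_i$. By distributivity this holds iff $aI_i=0$ for each $i$. One inclusion is immediate because each $I_i\subseteq\sum I_i$. The other uses $a(x_1+\dots+x_n)=\sum ax_k$.

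For item (5), I would combine item (1) with Proposition~\ref{pro:quotient}(3). This gives $\ann(IJ)=(0:IJ)=(0:JI)=((0:J):I)=(\ann(J):I)$. The symmetric ordering $IJ=JI$ gives $((0:I):J)=(\ann(I):J)$. If one prefers not to rely on (3), the direct check is that $a(IJ)=0$ iff $a x y=0$ for all $x\in I,\ y\in J$. This holds iff $ax\in\ann(J)$ for all $x\in I$, which is exactly the condition $a\in(\ann(J):I)$. Here one uses that $IJ$ is generated by the products $xy$.
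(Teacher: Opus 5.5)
Your argument is correct: each item follows from the definitions as you describe, including the Galois-connection step for $\ann(I)=\ann(\ann(\ann(I)))$ and the elementwise check for (5). The paper gives no proof of this proposition at all. It recalls these as standard identities with a pointer to \cite{atiyah2018introduction} and \cite{greuel2008singular}, and your verification (order reversal for (3), and the reduction of (5) to Proposition~\ref{pro:quotient}(3) via $\ann(I)=(0:I)$, made self-contained by your direct check) is the standard argument those references have in mind.
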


We will also need some basic facts about trivial extensions and the Pr\"ufer $p$-group.

\begin{definition}[Trivial extension]\label{def:trivial-extension}
Let $R$ be a commutative ring and $M$ an $R$-module. The \emph{trivial extension} of $R$ by $M$, denoted by $R\ltimes M$, is the ring whose underlying set is $R\oplus M$, with addition defined componentwise and multiplication given by
\[
(r,m)(s,n)=(rs,\,rn+sm).
\]
This ring is commutative and has identity $(1,0)$.
\end{definition}

\begin{definition}[Pr\"ufer $p$-group]\label{def:prufer}
Let $p$ be a prime. The \emph{Pr\"ufer $p$-group}, denoted by $\mathbb Z_{p^\infty}$, is the direct limit of the system
\[
\mathbb Z/p\mathbb Z \xrightarrow{\cdot p} \mathbb Z/p^2\mathbb Z \xrightarrow{\cdot p} \mathbb Z/p^3\mathbb Z \xrightarrow{\cdot p}\cdots
\]
Equivalently, $\mathbb Z_{p^\infty}$ is the subgroup of $\mathbb Q/\mathbb Z$ consisting of all elements whose order is a power of $p$. For $n\ge 1$, we denote by $C_{p^n}$ the unique subgroup of $\mathbb Z_{p^\infty}$ of order $p^n$.
\end{definition}

\begin{proposition}[Subgroups of the Pr\"ufer group]\label{prop:prufer-subgroups}
Let $p$ be a prime and $M=\mathbb Z_{p^\infty}$. Then:
\begin{enumerate}
  \item The proper non-zero subgroups of $M$ are precisely the cyclic groups $C_{p^n}$ for $n\ge 1$, and they form a chain
  \[
  C_p\subset C_{p^2}\subset C_{p^3}\subset\cdots\subset M.
  \]
  \item $M$ is divisible: for every $m\in M$ and every non-zero integer $d$, there exists $m'\in M$ such that $d m'=m$.
\end{enumerate}
\end{proposition}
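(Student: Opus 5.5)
The plan is to identify $M=\mathbb Z_{p^\infty}$ with the subgroup of $\mathbb Q/\mathbb Z$ of classes $a/p^n+\mathbb Z$, $a\in\mathbb Z$, $n\ge 0$. For each $n\ge 1$ I will take $C_{p^n}=\langle 1/p^n+\mathbb Z\rangle$. This group is cyclic of order $p^n$, and it is the set of elements of $M$ killed by $p^n$.

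For (1), the key observation is that an element $a/p^n+\mathbb Z$ with $\gcd(a,p)=1$ generates exactly $C_{p^n}$. Indeed, $a$ is invertible modulo $p^n$, so some multiple of the element equals $1/p^n+\mathbb Z$.

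Now let $H$ be a non-zero subgroup of $M$. There are two cases.
\begin{itemize}
\item If the orders of elements of $H$ are bounded, let $p^n$ be the maximal order. Then $H\subseteq C_{p^n}$, since every element of $H$ is killed by $p^n$. Conversely, $H$ contains an element of order $p^n$, which generates $C_{p^n}$ by the observation. Hence $H=C_{p^n}$.
\item If the orders are unbounded, then $H$ contains $C_{p^n}$ for infinitely many $n$. Since the $C_{p^n}$ are nested, $H$ contains their union, which is $M$. Hence $H=M$.
\end{itemize}
The chain $C_p\subset C_{p^2}\subset\cdots$ holds because $1/p^n=p\cdot(1/p^{n+1})$. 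The inclusions are strict because the orders differ. Each $C_{p^n}$ is finite while $M$ is infinite, so the $C_{p^n}$ are proper, and $M$ itself is not of the form $C_{p^n}$.

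For (2), write $d=p^k u$ with $\gcd(u,p)=1$, and take $m=a/p^n+\mathbb Z$. Choose $v$ with $uv\equiv 1 \pmod{p^{n+k}}$ and set $m'=av/p^{n+k}+\mathbb Z$. Then
\[
dm'=\frac{auv}{p^n}+\mathbb Z=\frac{a}{p^n}+\mathbb Z=m,
\]
where the middle equality holds because $a(uv-1)/p^n$ is an integer, as $p^{n}\mid p^{n+k}\mid uv-1$.

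The only step needing care is the claim in (1) that an element of exact order $p^n$ generates all of $C_{p^n}$. That step carries the classification, and it rests on the invertibility of $a$ modulo $p^n$. The rest is routine.
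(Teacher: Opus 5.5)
Your proof is correct. The paper gives no proof of this proposition to compare against: it is recalled in the preliminaries as a standard fact about the Pr\"ufer group.

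Your argument is the standard one. You classify a subgroup $H$ by whether the orders of its elements are bounded, and you prove divisibility by inverting the prime-to-$p$ part of $d$ modulo a power of $p$.

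The one step you leave implicit is that an element of exact order $p^n$ has the form $a/p^n+\mathbb Z$ with $p\nmid a$. This follows by writing the element in lowest terms. Alternatively, such an element lies in $C_{p^n}$ and generates a cyclic subgroup of order $p^n=|C_{p^n}|$, so it generates $C_{p^n}$ without needing the inverse modulo $p^n$.

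Your classification also shows that $\langle 1/p^n+\mathbb Z\rangle$ is the unique subgroup of order $p^n$. This matches your definition of $C_{p^n}$ with the paper's.
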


\begin{proposition}[Ideals of a trivial extension]\label{prop:ideals-trivial-extension}
Let $R$ be a commutative ring, $M$ an $R$-module, and $A=R\ltimes M$. Then the ideals of $A$ are in bijection with the pairs $(I,N)$, where $I$ is an ideal of $R$, $N$ is a submodule of $M$, and $I M\subseteq N$. The bijection sends $(I,N)$ to the ideal
\[
I\oplus N=\{(r,m)\mid r\in I,\ m\in N\}.
\]
\end{proposition}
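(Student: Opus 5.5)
The plan is to establish the correspondence in three steps: well-definedness, injectivity, and surjectivity. The third step needs extra care, since as literally stated it can fail.

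\emph{Well-definedness.} Given an ideal $I\subseteq R$ and a submodule $N\subseteq M$, the set $I\oplus N$ is clearly an additive subgroup of $A$. It is closed under multiplication by $A$ if and only if $(s,n)(r,m)=(sr,\,sm+rn)\in I\oplus N$ for all $(s,n)\in A$ and $(r,m)\in I\oplus N$. The first coordinate lies in $I$ automatically. In the second coordinate, $sm\in N$ because $N$ is a submodule, so the condition reduces to $rn\in N$ for all $r\in I$ and $n\in M$, that is, $IM\subseteq N$. Thus the pairs with $IM\subseteq N$ are exactly those for which $I\oplus N$ is an ideal.

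\emph{Injectivity.} The pair is recovered from $I\oplus N$: $I$ is the image of $I\oplus N$ under the projection $\pi\colon A\to R$, and $N$ is determined by $(I\oplus N)\cap(0\oplus M)=0\oplus N$.

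\emph{Surjectivity.} This is the main obstacle. Given an ideal $J\subseteq A$, I would set $I=\pi(J)$, which is an ideal of $R$, and $N=\{m\mid (0,m)\in J\}$, which is a submodule. Then $IM\subseteq N$, because for $(r,m)\in J$ and $n\in M$ we have $(r,m)(0,n)=(0,rn)\in J$. Clearly $J\subseteq I\oplus N$ holds only once we know that $(r,m)\in J$ forces $(0,m)\in J$, equivalently $(r,0)\in J$; this is exactly the statement that $J$ is homogeneous.

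This is not automatic. In $\mathbb Z\ltimes\mathbb Z$ the ideal generated by $(2,1)$ consists of the elements $(2a,\,a+2b)$, so it does not contain $(2,0)$ and is not of the form $I\oplus N$. So I would prove the proposition as a bijection between \emph{homogeneous} ideals and such pairs. I would then verify homogeneity separately under a hypothesis covering the case used later: every $(r,m)\in J$ satisfies $m\in rM$.

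Under that hypothesis, write $m=rm'$. Then
\[
(r,m)(1,-m')=(r,\,m-rm')=(r,0)\in J.
\]
This holds in particular when $M$ is divisible, $R=\mathbb Z$ and $r\neq0$, using Proposition~\ref{prop:prufer-subgroups}(2). When $r=0$ the element $(0,m)$ is already homogeneous. Hence every ideal of $\mathbb Z\ltimes\mathbb Z_{p^\infty}$ has the form $I\oplus N$ with $IM\subseteq N$, which is the form in which the proposition is needed for the Sierpi\'nski-space example.
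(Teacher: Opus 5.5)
The paper gives no proof of this proposition. It is recalled in the preliminaries as a standard fact and used only to enumerate the ideals of $\mathbb Z\ltimes\mathbb Z_{p^\infty}$ in Proposition~\ref{prop:cosp-trivial-extension}.

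Your proposal is correct, and you are right that the statement as written is false. In $\mathbb Z\ltimes\mathbb Z$ the principal ideal generated by $(2,1)$ is $\{(2a,\,a+2b)\mid a,b\in\mathbb Z\}$. It contains $(2,1)$ but not $(2,0)$, so it cannot be of the form $I\oplus N$. The true general statement is the one you prove: $(I,N)\mapsto I\oplus N$ is a bijection from pairs with $IM\subseteq N$ onto the \emph{homogeneous} ideals, meaning those $J$ with $(r,m)\in J\Rightarrow (r,0),(0,m)\in J$.

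Your divisibility computation $(r,m)(1,-m')=(r,0)$ shows that every ideal of $\mathbb Z\ltimes\mathbb Z_{p^\infty}$ is homogeneous. This is exactly what the later example needs. The paper's proof of Proposition~\ref{prop:cosp-trivial-extension} invokes the unconditional version of this proposition to list all ideals, and uses divisibility only to force $N=M$ when $d\neq 0$. Your argument is what actually justifies that list, so your version repairs the paper rather than merely reproving it.

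Two small points to tidy up:
\begin{enumerate}
\item Phrase your sufficient condition as ``$m\in rM$ whenever $(r,m)\in J$ and $r\neq 0$''. As literally stated it fails for elements $(0,m)$ with $m\neq 0$, although you treat that case separately.
\item Record explicitly that a homogeneous $J$ also satisfies $I\oplus N\subseteq J$. For $r\in I=\pi(J)$, pick $(r,m)\in J$; homogeneity gives $(r,0)\in J$, and adding $(0,n)\in J$ for $n\in N$ gives $(r,n)\in J$.
\end{enumerate}
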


\section{Coproduct of ideals}
\label{sec:coproduct}

The usual ideal quotient $(I:J)$ is an asymmetric operation that measures how much of $J$ is needed to multiply into $I$ to obtain an element of $I$. When working with annihilators, however, it is often convenient to have a dual construction that instead measures how much of the annihilator of $I$ is contained in $J$. This motivates the following definition.

\begin{definition}\label{def:coproduct}
Let $A$ be a ring and $I,J\subseteq A$. We define the \emph{coproduct} of $I$ and $J$, denoted by $[I:J]$, as the ideal quotient
\[
[I:J] := (J:\ann(I)).
\]
Equivalently,
\[
[I:J] = \{\, a \in A \mid a\cdot \ann(I) \subseteq J \,\}.
\]
Since ideal quotients are ideals, we have $[I:J]\subseteq A$.
\end{definition}

The following proposition records the elementary behaviour of the coproduct when one of the arguments is the zero ideal or the whole ring.

\begin{proposition}\label{prop:coprod-basic}
Let $A$ be a ring and $I,J\subseteq A$. Then:
\begin{enumerate}
  \item $[0:J]=J$;
  \item $[I:A]=A$;
  \item $[A:J]=A$;
  \item $[I:0]=\ann(\ann(I))$.
\end{enumerate}
\end{proposition}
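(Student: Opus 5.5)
Each of the four items comes from unwinding the definition $[I:J]=(J:\ann(I))$ and then applying the identities of Proposition~\ref{pro:quotient} and Proposition~\ref{pro:annihilator}. The plan is to compute $\ann(I)$ in the two extreme cases $I=0$ and $I=A$, and then read off the ideal quotient.

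For (1), note that $\ann(0)=A$, since every element kills $0$. Hence
\[
[0:J]=(J:A)=\{a\in A\mid aA\subseteq J\}.
\]
Because $A$ is unital, $aA\subseteq J$ holds if and only if $a=a\cdot 1\in J$; conversely, $a\in J$ gives $aA\subseteq J$ since $J$ is an ideal. So $(J:A)=J$.

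For (2), we have $[I:A]=(A:\ann(I))$. By Proposition~\ref{pro:quotient}(2), this equals $A$ because $\ann(I)\subseteq A$.

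For (3), first compute $\ann(A)=0$: any $a$ with $aA=0$ satisfies $a=a\cdot 1=0$. Then
\[
[A:J]=(J:0)=A,
\]
again by Proposition~\ref{pro:quotient}(2), since $0\subseteq J$.

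For (4), we have $[I:0]=(0:\ann(I))$. By Proposition~\ref{pro:annihilator}(1), $(0:K)=\ann(K)$, so this equals $\ann(\ann(I))$.

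None of these steps is a real obstacle. The only point needing care is that the unit of $A$ is used in (1) and (3), to identify $(J:A)$ with $J$ and $\ann(A)$ with $0$.
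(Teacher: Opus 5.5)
Your proposal is correct and follows essentially the same route as the paper: unwind $[I:J]=(J:\ann(I))$, use $\ann(0)=A$ and $\ann(A)=0$, and read off the ideal quotients. The only differences are cosmetic. You invoke Proposition~\ref{pro:quotient}(2) in (2) and (3) where the paper argues directly, and you spell out the role of the unit in $(J:A)=J$ and $\ann(A)=0$, which the paper leaves implicit.
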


\begin{proof}
By definition, $[I:J]=(J:\ann(I))$. Since $\ann(0)=A$, we obtain
\[
[0:J]=(J:A)=J,
\]
which proves (1). For (2), observe that $A\cdot \ann(I)\subseteq A$ for every ideal $I$, hence every element of $A$ belongs to $(A:\ann(I))$; therefore $(A:\ann(I))=A$, so $[I:A]=A$. For (3), since $\ann(A)=0$, we have
\[
[A:J]=(J:\ann(A))=(J:0)=A.
\]
Finally, for (4), again by definition,
\[
[I:0]=(0:\ann(I))=\ann(\ann(I)).
\]
This completes the proof.
\end{proof}

\begin{example}\label{ex:coproduct-Z}
In $\mathbb{Z}$, every ideal is of the form $m\mathbb{Z}$. Since $\mathbb{Z}$ is an integral domain,
\[
\ann(m\mathbb{Z})=
\begin{cases}
\mathbb{Z}, & m=0,\\
0, & m\neq 0.
\end{cases}
\]
Therefore:
\[
[m\mathbb{Z}:n\mathbb{Z}]=
\begin{cases}
n\mathbb{Z}, & m=0,\\[4pt]
\mathbb{Z}, & m\neq 0.
\end{cases}
\]
\end{example}

The coproduct can be seen as a right-adjoint operation to the ideal quotient in the following sense: while $(I:J)$ is the largest ideal $K$ such that $KJ\subseteq I$, the coproduct $[I:J]$ is the largest ideal $K$ such that $K\cdot \ann(I)\subseteq J$.

Next, we investigate the associativity-like behaviour of the coproduct.

\begin{proposition}\label{prop:coproduct-associative}
Let $A$ be a ring and $I,J,K\subseteq A$. Then
\[
[[I:J]:K] \subseteq [I:[J:K]].
\]
\end{proposition}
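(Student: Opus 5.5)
The plan is to unfold both sides using Definition~\ref{def:coproduct} and reduce the inclusion to a single containment between annihilators. On the left,
\[
[[I:J]:K]=\bigl(K:\ann([I:J])\bigr).
\]
On the right, $[I:[J:K]]=\bigl([J:K]:\ann(I)\bigr)=\bigl((K:\ann(J)):\ann(I)\bigr)$. By Proposition~\ref{pro:quotient}(3) this equals
\[
\bigl(K:\ann(J)\,\ann(I)\bigr).
\]
The ideal quotient $(K:L)$ is order-reversing in $L$: if $L'\subseteq L$ then $(K:L)\subseteq(K:L')$. So it suffices to show
\[
\ann(I)\,\ann(J)\subseteq \ann([I:J]).
\]

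The key step is this containment. Since $\ann([I:J])$ is an ideal, it is enough to check generators $xy$ of the product, with $x\in\ann(I)$ and $y\in\ann(J)$. Take any $z\in[I:J]$. By definition $z\cdot\ann(I)\subseteq J$, so $zx\in J$. Then $zxy\in J\,\ann(J)=0$. Hence $xy$ annihilates $[I:J]$, as required.

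Alternatively, one can phrase this via Proposition~\ref{pro:annihilator}. Part (5) gives $\ann(\ann(I)\ann(J))=(\ann(\ann(J)):\ann(I))$. Part (3) gives $J\subseteq\ann(\ann(J))$, so this quotient contains $(J:\ann(I))=[I:J]$. Applying part (2) and then part (3) once more yields $\ann(I)\ann(J)\subseteq\ann(\ann(\ann(I)\ann(J)))\subseteq\ann([I:J])$.

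I expect no real obstacle here. The only point needing care is bookkeeping: correctly rewriting the iterated quotient on the right via Proposition~\ref{pro:quotient}(3), and remembering that the quotient reverses inclusions in its second argument, so that a smaller ideal yields a larger quotient.
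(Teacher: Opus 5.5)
Your proof is correct and is essentially the paper's argument. Both rest on the same key fact: if $a\in\ann(I)$ and $b\in\ann(J)$, then $ab$ annihilates $[I:J]$, because $ca\in J$ for every $c\in[I:J]$. The paper reaches this by a direct element chase, while you first rewrite the right-hand side as $(K:\ann(I)\ann(J))$ via Proposition~\ref{pro:quotient}(3) and then use that the quotient reverses inclusions in its second argument.
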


\begin{proof}
Let $x\in [[I:J]:K]$ and $a\in \ann(I)$. It suffices to show that $ax\in (K:\ann(J))$. Let $b\in \ann(J)$. We must verify that $(xa)b\in K$. Since $x\in [[I:J]:K]$, it is enough to prove that $ab\in \ann([I:J])$. Let $c\in [I:J]$. Then
\[
(ab)c = a(bc).
\]
Because $c\in [I:J]=(J:\ann(I))$ and $a\in \ann(I)$, we have $ac\in J$. Since $b\in \ann(J)$, it follows that $b(ac)=0$. Hence $(ab)c=0$ for all $c\in [I:J]$, so $ab\in \ann([I:J])$. Therefore $x(ab)\in K$, and consequently $ax\in (K:\ann(J))=[J:K]$. Thus $x\in ([J:K]:\ann(I))=[I:[J:K]]$, as required.
\end{proof}

The reverse inclusion does not hold in general, as the following example shows.

\begin{example}\label{ex:nonassoc}
The reverse inclusion
\[
[I:[J:K]] \subseteq [[I:J]:K]
\]
does not hold in general. To see this, let $k$ be a field and consider the ring
\[
A = k[x,y]/\langle x^2, y^2, xy\rangle,
\]
where $\bar{x}$ and $\bar{y}$ denote the classes of $x$ and $y$ in the quotient. This is a local ring with maximal ideal $\mathfrak{m}=\langle \bar{x}, \bar{y} \rangle$, and one readily checks that $\mathfrak{m}^2=0$ and $\ann(\mathfrak{m})=\mathfrak{m}$. Moreover, $\ann(\langle \bar{x} \rangle)=\mathfrak{m}$.

Take the ideals
\[
I=\langle \bar{x} \rangle,\qquad J=\langle \bar{x} \rangle,\qquad K=\langle \bar{x} \rangle.
\]
Then $\ann(I)=\mathfrak{m}$. Hence
\[
[I:J]=(J:\ann(I))=(\langle \bar{x} \rangle:\mathfrak{m}).
\]
Now $a\cdot \mathfrak{m}\subseteq \langle \bar{x} \rangle$ if and only if $a\in \mathfrak{m}$, so $(\langle \bar{x} \rangle:\mathfrak{m})=\mathfrak{m}$. Thus $[I:J]=\mathfrak{m}$, and since $\ann(\mathfrak{m})=\mathfrak{m}$, we get
\[
[[I:J]:K]=[\mathfrak{m}:\langle \bar{x} \rangle]=(\langle \bar{x} \rangle:\ann(\mathfrak{m}))=(\langle \bar{x} \rangle:\mathfrak{m})=\mathfrak{m}.
\]
On the other hand,
\[
[J:K]=[\langle \bar{x} \rangle:\langle \bar{x} \rangle]=(\langle \bar{x} \rangle:\ann(\langle \bar{x} \rangle))=(\langle \bar{x} \rangle:\mathfrak{m})=\mathfrak{m},
\]
and therefore
\[
[I:[J:K]]=[\langle \bar{x} \rangle:\mathfrak{m}]=(\mathfrak{m}:\ann(\langle \bar{x} \rangle))=(\mathfrak{m}:\mathfrak{m})=A.
\]
Thus
\[
[[I:J]:K]=\mathfrak{m} \quad\text{and}\quad [I:[J:K]]=A.
\]
Since $1\notin \mathfrak{m}$, we have $[I:[J:K]]\nsubseteq [[I:J]:K]$. Consequently, the inclusion in the opposite direction fails in general.
\end{example}

We continue by establishing a fundamental inclusion that relates the coproduct to the sum of ideals.

\begin{proposition}\label{pro:sum}
Let $A$ be a ring and $I,J\subseteq A$. Then
\[
I+J \subseteq [I:J].
\]
\end{proposition}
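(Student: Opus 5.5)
Since $[I:J]=(J:\ann(I))$ is an ideal, it is closed under addition, so it suffices to show separately that $I\subseteq [I:J]$ and $J\subseteq [I:J]$; the inclusion $I+J\subseteq[I:J]$ then follows at once.

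For $J\subseteq [I:J]$, we invoke Proposition~\ref{pro:quotient}(1) with $K=\ann(I)$. It gives $J\subseteq (J:\ann(I))=[I:J]$ directly. Elementwise: if $a\in J$, then $a\cdot\ann(I)\subseteq J$ because $J$ is an ideal.

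For $I\subseteq [I:J]$, take $a\in I$ and $b\in\ann(I)$. By definition of the annihilator, $ab=0$, and $0\in J$. Hence $a\cdot\ann(I)=0\subseteq J$, that is, $a\in (J:\ann(I))=[I:J]$. Put differently, $I\ann(I)=0\subseteq J$, and Proposition~\ref{pro:quotient} characterises $(J:\ann(I))$ as the largest ideal $K$ with $K\ann(I)\subseteq J$.

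Neither step presents a real obstacle. The only point needing care is the first: the inclusion of $I$ relies on the product $I\ann(I)$ being zero, not merely on $I$ lying inside $J$. This is exactly the duality between the annihilator and the coproduct that the paper emphasises.
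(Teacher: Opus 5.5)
Your proof is correct and is essentially the paper's argument. The paper takes $x=a+b$ with $a\in I$, $b\in J$ and $y\in\ann(I)$ and computes $xy=ay+by=by\in J$, which is your two inclusions $I\subseteq[I:J]$ and $J\subseteq[I:J]$ combined in one line.
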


\begin{proof}
Let $x\in I+J$ and $y\in \ann(I)$. By definition, $x=a+b$ for some $a\in I$ and $b\in J$. Since $a\in I$ and $y\in \ann(I)$, we have $ay=0$. Hence
\[
xy=(a+b)y=ay+by=by\in J,
\]
because $b\in J$ and $J$ is an ideal. Thus every element of $I+J$ satisfies the condition to belong to $(J:\ann(I))=[I:J]$. Therefore $I+J\subseteq [I:J]$.
\end{proof}

The inclusion in Proposition~\ref{pro:sum} is generally strict. For instance, in the ring
\[
A=k[x,y]/\langle x^2,y^2,xy\rangle,
\]
let $\mathfrak m=\langle \bar{x},\bar{y}\rangle$ be the maximal ideal, and take
\[
I=\langle \bar{x}\rangle,\qquad J=0.
\]
Then $I+J=\langle \bar{x}\rangle$, while
\[
[I:J]=[I:0]=\ann(\ann(I))=\ann(\mathfrak m)=\mathfrak m.
\]
Since $\langle \bar{x}\rangle\subsetneq \mathfrak m$, the inclusion $I+J\subseteq [I:J]$ is proper.

The coproduct provides a fourth operation on ideals, alongside the product, intersection, and sum. The following result establishes a fundamental chain of inclusions that compares all four operations.

\begin{corollary}\label{cor:order}
Let $A$ be a ring and $I,J\subseteq A$. Then
\[
IJ \subseteq I\cap J \subseteq I+J \subseteq [I:J].
\]
\end{corollary}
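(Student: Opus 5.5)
The chain of inclusions will be proved link by link. The first two inclusions are classical facts about ideals, and the last one is exactly Proposition~\ref{pro:sum}.

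For $IJ\subseteq I\cap J$, it suffices to check generators. A typical generator of $IJ$ is a product $ab$ with $a\in I$ and $b\in J$. Since $I$ is an ideal, $ab\in I$; since $J$ is an ideal, $ab\in J$. Hence $ab\in I\cap J$. Because $I\cap J$ is an ideal, it is closed under finite sums, so it contains every element of $IJ$.

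For $I\cap J\subseteq I+J$, take $x\in I\cap J$ and write $x=x+0$ with $x\in I$ and $0\in J$. This exhibits $x$ as an element of $I+J$. (Equivalently, $I\cap J\subseteq I\subseteq I+J$.)

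The final inclusion $I+J\subseteq[I:J]$ is precisely Proposition~\ref{pro:sum}, which I will cite directly. Chaining the three inclusions by transitivity of $\subseteq$ gives the corollary.

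No step is a genuine obstacle. The only point needing care is that $IJ$ consists of finite sums of products, not just single products, so the first step must use the closure of $I\cap J$ under addition.
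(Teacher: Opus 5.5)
Your proposal is correct and matches the paper's approach: the paper gives no separate proof and treats the corollary as immediate, since the first two inclusions are standard facts about ideals and the last is exactly Proposition~\ref{pro:sum}. Your note that $IJ$ consists of finite sums of products, so you need $I\cap J$ to be closed under addition, is the only detail to check, and you handle it correctly.
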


The following proposition establishes a crucial link between the annihilator and the coproduct, revealing a complementary behaviour with respect to the unit ideal.

\begin{proposition}\label{prop:coproduct-annihilator}
Let $A$ be a commutative ring and $I$ an ideal of $A$. Then
\[
[I:\ann(I)] = A,
\]
and if $J \subseteq A$ satisfies $[I:J] = A$, then $\ann(I) \subseteq J$.
\end{proposition}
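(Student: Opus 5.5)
Both assertions come directly from the definition $[I:J]=(J:\ann(I))$ together with Proposition~\ref{pro:quotient}(2), which says $(J:K)=A$ if and only if $K\subseteq J$. Taking $K=\ann(I)$ gives the equivalence
\[
[I:J]=A \iff \ann(I)\subseteq J ,
\]
valid for every ideal $J$ of $A$. Both claims of the proposition are then immediate consequences of this equivalence.

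For the first claim, set $J=\ann(I)$. The inclusion $\ann(I)\subseteq\ann(I)$ holds trivially, so $[I:\ann(I)]=(\ann(I):\ann(I))=A$. One can also see this by hand: every $a\in A$ satisfies $a\,\ann(I)\subseteq\ann(I)$, because $\ann(I)$ is an ideal.

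For the second claim, suppose $[I:J]=A$. Then $1\in(J:\ann(I))$, so $1\cdot\ann(I)\subseteq J$, that is, $\ann(I)\subseteq J$. This is the forward direction of Proposition~\ref{pro:quotient}(2) applied with $K=\ann(I)$.

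There is no real obstacle here. The only point needing care is the direction of the inclusion in Proposition~\ref{pro:quotient}(2), and using membership of $1$ avoids that issue completely. It is also worth recording, as the paper's framing suggests, that together the two claims say $\ann(I)$ is the \emph{least} ideal $J$ with $[I:J]=A$. This is the dual of the fact that $\ann(I)$ is the largest ideal $K$ with $IK=0$.
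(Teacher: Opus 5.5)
Your proof is correct and follows the paper's argument: both claims are read off from $[I:J]=(J:\ann(I))$ together with Proposition~\ref{pro:quotient}(2) applied with $K=\ann(I)$. The direct check via $1\in(J:\ann(I))$ simply unpacks that same step elementwise.
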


\begin{proof}
By definition, $[I:\ann(I)] = (\ann(I):\ann(I))$. Since $\ann(I) \subseteq \ann(I)$, it follows from Proposition~\ref{pro:quotient}.(2) that $(\ann(I):\ann(I)) = A$, proving the first assertion.

Now suppose $[I:J] = A$. Again by definition, $[I:J] = (J:\ann(I)) = A$. By Proposition~\ref{pro:quotient}.(2), this is equivalent to $\ann(I) \subseteq J$, as required.
\end{proof}

The previous proposition unveils a fundamental duality between the product and the coproduct of ideals. For a fixed ideal $I$, the annihilator $\ann(I)$ is the largest ideal $K$ such that $IK=0$: indeed, $I\cdot\ann(I)=0$, and if $K$ is any ideal with $IK=0$, then by definition $K\subseteq\ann(I)$. On the other hand, $\ann(I)$ is the smallest ideal $J$ such that $[I:J]=A$: we have $[I:\ann(I)]=A$, and if $[I:J]=A$, then Proposition~\ref{prop:coproduct-annihilator} gives $\ann(I)\subseteq J$. Thus, in the lattice of ideals, the pair $(I,\ann(I))$ behaves as a kind of complement with respect to the two operations:
\[
I\ann(I) = 0 \qquad \text{and} \qquad [I:\ann(I)] = A.
\]
The product yields the zero ideal, while the coproduct yields the unit ideal. This symmetric duality justifies the term ``coproduct'' for the operation $[\cdot,\cdot]$, since it is dual to the usual product in the sense that the annihilator serves as a complement for both operations. In the special case of an Artinian semisimple ring, where the product and intersection coincide and $\ann(I)$ is the Boolean complement of $I$, the coproduct reduces to the usual sum, recovering the classical complementation of the ideal lattice \cite{ChajdaLanger2019}.

\section{Coprime Ideals}
\label{sec:coprime-ideal}

In classical commutative algebra, a prime ideal $P$ is defined by the property that $IJ \subseteq P$ implies $I \subseteq P$ or $J \subseteq P$. This captures the notion of irreducibility with respect to the product of ideals. Since the coproduct $[\cdot,\cdot]$ behaves dually to the product --- replacing $IJ=0$ by $[I:J]=A$ --- it is natural to define a corresponding notion of primality with respect to this new operation. We call such ideals \emph{coprime}.

\begin{definition}\label{def:coprime}
Let $A$ be a ring and $C\subseteq A$. We say that $C$ is a \emph{coprime ideal} if $C\neq 0$ and whenever $I,J\subseteq A$ satisfy
\[
C \subseteq [I:J],
\]
then
\[
C \subseteq I \quad \text{or} \quad C \subseteq J.
\]
\end{definition}

Equivalently, an ideal $C\neq 0$ is coprime if and only if, for all $I,J\subseteq A$,
\[
C\cdot \ann(I) \subseteq J \quad \Longrightarrow \quad C\subseteq I \ \text{or} \ C\subseteq J,
\]
since $C\subseteq [I:J]=(J:\ann(I))$ is precisely the condition $C\cdot\ann(I)\subseteq J$. This formulation highlights the duality with prime ideals, where $IJ\subseteq P$ implies containment.

The following characterization will be used repeatedly.

\begin{lemma}\label{lem:coprime-characterization}
Let $A$ be a ring and $C\subseteq A$ with $C\neq 0$. Then $C$ is coprime if and only if $C\ann(I)=C$ for every ideal $I\subseteq A$ with $C\not\subseteq I$.
\end{lemma}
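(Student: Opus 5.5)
The plan is to work throughout with the reformulation recorded after Definition~\ref{def:coprime}: $C\subseteq[I:J]$ if and only if $C\ann(I)\subseteq J$. Both directions then reduce to choosing a suitable $J$.

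For the forward implication, I will assume $C$ is coprime and fix an ideal $I$ with $C\not\subseteq I$. The natural test ideal is $J:=C\ann(I)$. Trivially $C\ann(I)\subseteq J$, so $C\subseteq[I:J]$. Coprimality then gives $C\subseteq I$ or $C\subseteq J$. The first alternative is excluded by hypothesis, so $C\subseteq C\ann(I)$. The reverse inclusion $C\ann(I)\subseteq C$ holds because $C$ is an ideal, and together these give $C\ann(I)=C$.

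For the converse, I will assume $C\ann(I)=C$ whenever $C\not\subseteq I$, and let $I,J$ satisfy $C\subseteq[I:J]$, that is, $C\ann(I)\subseteq J$. If $C\subseteq I$ there is nothing to prove. Otherwise the hypothesis gives $C=C\ann(I)\subseteq J$. Since $C\neq 0$ is assumed in the statement, this is exactly Definition~\ref{def:coprime}.

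I do not expect a genuine obstacle. The only step needing a moment's thought is the choice $J=C\ann(I)$ in the forward direction; everything else is unwinding the definition of the ideal quotient from Definition~\ref{def:coproduct}. One small point to check is that $C\ann(I)$ is indeed an ideal, so that it is a legitimate choice of $J$; this holds because products of ideals are ideals.
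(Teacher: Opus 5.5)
Your proposal is correct and is essentially the paper's own proof: in the forward direction you test coprimality against $J=C\ann(I)$, and in the converse you use $C=C\ann(I)\subseteq J$ whenever $C\not\subseteq I$. Your side remarks, that $C\ann(I)\subseteq C$ because $C$ is an ideal and that $C\ann(I)$ is a legitimate ideal, match what the paper leaves implicit.
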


\begin{proof}
Suppose that $C$ is coprime and let $I\subseteq A$ with $C\not\subseteq I$. Put $J:=C\ann(I)$. Then $C\ann(I)\subseteq J$, that is, $C\subseteq(J:\ann(I))=[I:J]$. Since $C$ is coprime and $C\not\subseteq I$, we have that $C\subseteq J=C\ann(I)$, and the other containment is clear.

Conversely, suppose that $C\ann(I)=C$ whenever $C\not\subseteq I$, and let $I,J\subseteq A$ with $C\subseteq[I:J]=(J:\ann(I))$, that is, $C\ann(I)\subseteq J$. If $C\subseteq I$ we are done. Otherwise $C=C\ann(I)\subseteq J$. Therefore $C$ is coprime.
\end{proof}

A first consequence is that coprimality is stable under passing to sums.

\begin{proposition}\label{prop:coprime-sum}
Let $C$ be a coprime ideal. If $I,J\subseteq A$ satisfy
\[
C \subseteq I+J,
\]
then
\[
C \subseteq I \quad \text{or} \quad C \subseteq J.
\]
\end{proposition}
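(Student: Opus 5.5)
The argument is a one-line reduction to the definition of coprimality, using Proposition~\ref{pro:sum}. Suppose $C$ is coprime and $C\subseteq I+J$. Proposition~\ref{pro:sum} gives $I+J\subseteq [I:J]$, so
\[
C\subseteq I+J\subseteq [I:J].
\]
Definition~\ref{def:coprime} then yields $C\subseteq I$ or $C\subseteq J$, which is the claim.

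A second route goes through Lemma~\ref{lem:coprime-characterization}. Assume $C\not\subseteq I$, so that $C=C\ann(I)$. For any $c\in C$ write $c=\sum_k c_k a_k$ with $c_k\in C\subseteq I+J$ and $a_k\in\ann(I)$. Splitting each $c_k$ as $i_k+j_k$ with $i_k\in I$ and $j_k\in J$, the terms $i_k a_k$ vanish. Hence $c=\sum_k j_k a_k\in J$, and so $C\subseteq J$. This is essentially the computation in the proof of Proposition~\ref{pro:sum}, and it could be mentioned as an alternative.

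There is no real obstacle. The only point to check is the direction of the inclusion chain in Corollary~\ref{cor:order}: it must give $I+J\subseteq[I:J]$, so that the hypothesis $C\subseteq I+J$ really implies the hypothesis of the coprimality definition. It does.
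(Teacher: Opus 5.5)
Your proposal is correct and matches the paper's own proof: the inclusion $I+J\subseteq[I:J]$ from Proposition~\ref{pro:sum} turns the hypothesis into the defining condition of coprimality. Your alternative argument via Lemma~\ref{lem:coprime-characterization} is also valid, though it is not needed here.
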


\begin{proof}
By Proposition~\ref{pro:sum}, we have $I+J \subseteq [I:J]$. Hence $C\subseteq I+J$ implies $C\subseteq [I:J]$. Since $C$ is coprime, it follows that $C\subseteq I$ or $C\subseteq J$.
\end{proof}

Recall the classical Brauer lemma: for a minimal ideal $I$, either $I^2=0$ or $I=Ae$ with $e\in A$ an idempotent element \cite{lam1991first}. This yields a natural supply of coprime ideals.

\begin{proposition}\label{prop:minimal-idempotent-coprime}
Let $A$ be a ring. If $M\subseteq A$ is an idempotent minimal ideal, then $M$ is coprime.
\end{proposition}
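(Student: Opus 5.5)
The proof goes through Lemma~\ref{lem:coprime-characterization}. Since $M$ is minimal, $M\neq 0$. It therefore suffices to show that $M\ann(I)=M$ for every ideal $I$ with $M\not\subseteq I$. The hypothesis that $M$ is idempotent, $M^2=M$, should force $M\subseteq\ann(I)$, after which idempotence gives $M\ann(I)\supseteq M\cdot M=M$. The reverse containment $M\ann(I)\subseteq M$ is automatic because $M$ is an ideal.

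First, fix $I$ with $M\not\subseteq I$. By minimality of $M$, the ideal $M\cap I$ is a proper subideal of $M$, so $M\cap I=0$. Then $MI\subseteq M\cap I=0$, hence $M\subseteq\ann(I)$. Using $M^2=M$, we get
\[
M=M^2\subseteq M\ann(I)\subseteq M,
\]
so $M\ann(I)=M$, and Lemma~\ref{lem:coprime-characterization} yields that $M$ is coprime.

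Alternatively, one can invoke Brauer's lemma to write $M=Ae$ with $e$ idempotent. Then $e\cdot e=e\in M\ann(I)$ once $e\in\ann(I)$, which again follows from $eI\subseteq M\cap I=0$. This is not needed, since $M^2=M$ suffices directly.

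The only point requiring care is the passage from $M\not\subseteq I$ to $M\cap I=0$. This uses minimality in the precise sense that the only ideals contained in $M$ are $0$ and $M$. I expect no real obstacle beyond stating this cleanly; the remainder is the two-line containment chain above.
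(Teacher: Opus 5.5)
Your proof is correct and is essentially the paper's argument: minimality turns $M\not\subseteq I$ into $M\cap I=0$, hence $M\subseteq\ann(I)$, and idempotence $M=M^2$ does the rest. The only cosmetic difference is that you finish via Lemma~\ref{lem:coprime-characterization} by showing $M\ann(I)=M$, whereas the paper works directly from the definition with the chain $M=M^2\subseteq[I:J]\,\ann(I)\subseteq J$.
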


\begin{proof}
Let $I,J\subseteq A$ be ideals such that $M\subseteq [I:J]$. If $M\subseteq I$, there is nothing to prove. Suppose instead that $M\not\subseteq I$. Since $M$ is minimal, the ideal $I\cap M$ is properly contained in $M$, hence $I\cap M=0$. In particular,
\[
IM \subseteq I\cap M = 0,
\]
which implies $M\subseteq \ann(I)$.

On the other hand, from the definition of the coproduct, we have $[I:J]\,\ann(I)\subseteq J$. Consequently, using the idempotence of $M$ (i.e., $M^2=M$), we obtain
\[
M = M^2 \subseteq [I:J]\,\ann(I) \subseteq J.
\]
Therefore, in all cases, either $M\subseteq I$ or $M\subseteq J$. Hence $M$ is coprime.
\end{proof}

Proposition~\ref{prop:minimal-idempotent-coprime} shows that the coprime spectrum is non-empty whenever the ring admits a non-zero idempotent minimal ideal. In particular, in a semisimple Artinian ring, every simple ideal (which is idempotent) is coprime. This recovers the classical observation that the minimal ideals of a semisimple ring behave like ``atoms'' with respect to the lattice of ideals. Every minimal ideal of a ring is either idempotent or nilpotent. Indeed, if $M$ is a minimal ideal, then either $M^2=M$ or $M^2=0$. In the idempotent case, Proposition~\ref{prop:minimal-idempotent-coprime} guarantees that $M$ is coprime. The nilpotent case, however, does not always yield a coprime ideal, as the following example shows.

\begin{example}\label{ex:nilpotent-minimal-not-coprime}
Let $k$ be a field and consider the commutative ring
\[
R = k[x,y]/\langle x^2, xy, y^2\rangle,
\]
where $\bar{x}$ and $\bar{y}$ denote the classes of $x$ and $y$ in the quotient.  Let $\mathfrak{m} = \langle \bar{x}, \bar{y} \rangle $ be the unique maximal ideal of $R$. Note that $\mathfrak{m}^2 = 0$. We now show that the minimal ideal $C = \langle \bar{x} \rangle$ fails to be coprime.

Choose $I = J = \langle \bar{y} \rangle$. We first compute $\ann(I)$. For an element $r = a + b\bar{x} + c\bar{y} \in R$, we have
\[
r\bar{y} = (a + b\bar{x} + c\bar{y})\bar{y} = a\bar{y} + b\bar{x}\bar{y} + c\bar{y}^2 = a\bar{y}.
\]
Thus $r \in \ann(\langle \bar{y} \rangle)$ if and only if $a\bar{y} = 0$, which implies $a = 0$. Hence
\[
\ann(\langle \bar{y} \rangle) = \{\, b\bar{x} + c\bar{y} \mid b,c \in k \,\} = \mathfrak{m}.
\]

Next, we compute
\[
[I:J] = [\langle \bar{y} \rangle:\langle \bar{y} \rangle] = (\langle \bar{y} \rangle : \ann(\langle \bar{y} \rangle)) = (\langle \bar{y} \rangle : \mathfrak{m}).
\]
By definition,
\[
(\langle \bar{y} \rangle : \mathfrak{m}) = \{\, r \in R \mid r\mathfrak{m} \subseteq \langle \bar{y} \rangle \,\}.
\]
Take $r = a + b\bar{x} + c\bar{y} \in R$ and an arbitrary element $m = d\bar{x} + e\bar{y} \in \mathfrak{m}$, where $d,e \in k$. Since $\mathfrak{m}^2 = 0$, we have
\[
r m = (a + b\bar{x} + c\bar{y})(d\bar{x} + e\bar{y}) = ad\bar{x} + ae\bar{y}.
\]
For this product to lie in $\langle \bar{y} \rangle$ for every $d,e \in k$, the coefficient of $\bar{x}$ must vanish, so $ad = 0$ for all $d \in k$. Hence $a = 0$. Therefore $r \in \mathfrak{m}$. Conversely, if $r \in \mathfrak{m}$, then $r\mathfrak{m} \subseteq \mathfrak{m}^2 = 0 \subseteq \langle \bar{y} \rangle$. Thus
\[
(\langle \bar{y} \rangle : \mathfrak{m}) = \mathfrak{m}.
\]
So we conclude that
\[
[\langle \bar{y} \rangle:\langle \bar{y} \rangle] = \mathfrak{m}.
\]

Now observe that
\[
C = \langle \bar{x} \rangle \subseteq \mathfrak{m} = [\langle \bar{y} \rangle:\langle \bar{y} \rangle].
\]
However,
\[
C = \langle \bar{x} \rangle \not\subseteq I = \langle \bar{y} \rangle
\]
and
\[
C = \langle \bar{x} \rangle \not\subseteq J = \langle \bar{y} \rangle,
\]
because $\bar{x} \notin \langle \bar{y} \rangle$. Therefore, $C$ satisfies $C \subseteq [I:J]$, but it is contained in neither $I$ nor $J$. So $C$ is \textbf{not} a coprime ideal.

Since $C = \langle \bar{x} \rangle$ is a minimal ideal of $R$ (it is a one-dimensional $k$-subspace of the maximal ideal $\mathfrak{m}$, and $\mathfrak{m}^2 = 0$), this example proves that minimal nilpotent ideals are not necessarily coprime.
\end{example}

\section{Coprime Spectrum}
\label{sec:coprime-spectrum}

In this section we study the set of coprime ideals of a ring, which we call the coprime spectrum. We also investigate the special case of integral domains, where coprime ideals admit a particularly simple characterisation.

\begin{definition}\label{def:cosp}
Let $A$ be a ring. We define the \emph{coprime spectrum} of $A$, denoted by $\operatorname{CoSp}(A)$, as the set of all coprime ideals of $A$:
\[
\operatorname{CoSp}(A) := \{\, C \subseteq A \mid C \text{ is a coprime ideal} \,\}.
\]
\end{definition}

\begin{definition}\label{def:least-ideal}
Let $A$ be a ring. A non-zero ideal $M \subseteq A$ is called the \emph{least ideal} of $A$ if $M \subseteq I$ for every non-zero ideal $I \subseteq A$.
\end{definition}

The following result shows that in an integral domain, the only possible coprime ideals are the least ideal (when it exists), and conversely the least ideal is always coprime.

\begin{proposition}\label{prop:domain-coprime-least}
Let $D$ be an integral domain and $C \subseteq D$. Then $C$ is a coprime ideal if and only if $C$ is the least ideal of $D$.
\end{proposition}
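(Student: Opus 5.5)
The plan is to work through the characterisation in Lemma~\ref{lem:coprime-characterization}, using the fact that in an integral domain $\ann(I)=0$ for every non-zero ideal $I$, while $\ann(0)=D$.

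For the forward direction, let $C$ be coprime, so $C\neq 0$. Let $I$ be any non-zero ideal and suppose $C\not\subseteq I$. Lemma~\ref{lem:coprime-characterization} gives $C=C\ann(I)=C\cdot 0=0$, a contradiction. Hence $C\subseteq I$ for every non-zero ideal $I$, so $C$ is the least ideal of $D$.

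For the converse, let $C$ be the least ideal, so $C\neq 0$. Again we verify the condition of Lemma~\ref{lem:coprime-characterization}. Take an ideal $I$ with $C\not\subseteq I$. By leastness $I$ must be the zero ideal, so $\ann(I)=D$ and $C\ann(I)=CD=C$. Thus the condition holds and $C$ is coprime.

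I expect no real obstacle. The only care needed is the bookkeeping of the case $I=0$ versus $I\neq0$, and invoking the domain hypothesis exactly where $\ann(I)=0$ for $I\neq 0$: if $a\neq 0$ and $ab=0$ for some $0\neq b\in I$, the domain property is contradicted. One could instead argue directly from Definition~\ref{def:coprime}, computing $[I:J]=J$ if $I=0$ and $[I:J]=D$ otherwise, as in Example~\ref{ex:coproduct-Z}. Coprimality of $C$ then says exactly that $C\subseteq I$ or $C\subseteq J$ for all $I\neq 0$ and all $J$, and taking $J=0$ recovers leastness.
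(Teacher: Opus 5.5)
Your proof is correct, but it goes through Lemma~\ref{lem:coprime-characterization}, whereas the paper works directly from Definition~\ref{def:coprime}.

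In the forward direction, the paper tests coprimality on the pair $(I,I)$ with $I\neq 0$. It uses $[I:I]=(I:0)=D$ to force $C\subseteq I$. You instead read off $C=C\ann(I)=C\cdot 0=0$ from the lemma.

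In the converse, the paper notes that $C\subseteq[I:J]$ excludes $I=J=0$, because $[0:0]=(0:D)=0$. It then applies leastness to whichever of $I,J$ is non-zero. You observe that $C\not\subseteq I$ forces $I=0$, and there $C\ann(0)=C$ holds trivially.

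Your route reduces both directions to one line once the lemma is available. It also makes visible that the implication ``least $\Rightarrow$ coprime'' uses nothing about $D$ beyond $\ann(0)=D$, so it holds in an arbitrary ring. This is consistent with $C_p$ being coprime in $\mathbb Z\ltimes\mathbb Z_{p^\infty}$. The domain hypothesis enters only through $\ann(I)=0$ for $I\neq 0$ in the forward direction.

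The paper's route is self-contained and needs only two coproduct computations. The direct alternative you sketch at the end, with $J=0$ in place of $J=I$, is essentially the paper's argument.
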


\begin{proof}
$(\Rightarrow)$ Assume $C$ is coprime and let $I \subseteq D$ be a non-zero ideal. Since $D$ is a domain, $\ann(I)=0$. Therefore,
\[
[I:I] = (I:\ann(I)) = (I:0) = D,
\]
because $(I:0) = D$ (indeed, every element of $D$ annihilates $0$, so the quotient is the whole ring). Hence $C \subseteq D = [I:I]$. Since $C$ is coprime and both arguments of the coproduct are $I$, we have $C \subseteq I$. As $I$ was arbitrary non-zero, $C$ is contained in every non-zero ideal, so $C$ is the least ideal of $D$.

$(\Leftarrow)$ Assume now that $C$ is the least ideal of $D$. Let $I,J \subseteq D$ such that $C \subseteq [I:J]$. In particular, $[I:J] \neq 0$ because $C \neq 0$. We claim that this implies $I \neq 0$ or $J \neq 0$. Indeed, if $I=J=0$, then
\[
[0:0] = (0:\ann(0)) = (0:D) = 0,
\]
because in a domain $(0:D)=0$, contradicting $[0:0] \neq 0$. Thus at least one of $I,J$ is non-zero. Since $C$ is the least ideal, if $I \neq 0$, then $C \subseteq I$; if instead $J \neq 0$, then $C \subseteq J$. Therefore, in all cases, either $C \subseteq I$ or $C \subseteq J$. Hence $C$ is coprime.
\end{proof}

Proposition~\ref{prop:domain-coprime-least} shows that in an integral domain, the coprime spectrum contains at most one element: the least non-zero ideal, if it exists. Moreover, if the domain has a least ideal, then that ideal is automatically coprime. This provides a complete classification of coprime ideals in integral domains.

\begin{corollary}\label{cor:domain-field}
Let $D$ be an integral domain. If $\operatorname{CoSp}(D) \neq \varnothing$, then $D$ is a field.
\end{corollary}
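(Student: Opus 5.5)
By Proposition~\ref{prop:domain-coprime-least}, if $\operatorname{CoSp}(D)\neq\varnothing$ then $D$ has a least ideal $C$. So $C\neq 0$ and $C$ is contained in every non-zero ideal of $D$. The plan is to show that a domain with a least non-zero ideal is a field, by producing a non-zero ideal strictly smaller than $C$ whenever $C$ is not the whole ring.

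First choose $0\neq c\in C$. Since $Dc$ is a non-zero ideal, minimality gives $C\subseteq Dc\subseteq C$, so $C=Dc$ is principal. Next consider $Dc^2$. It is non-zero because $D$ is a domain and $c\neq 0$. Hence $C\subseteq Dc^2$, and in particular $c=dc^2$ for some $d\in D$. Cancelling $c$, which is legitimate in a domain, yields $1=dc$. Thus $c$ is a unit and $C=D$.

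Now $D$ itself is the least non-zero ideal, so every non-zero ideal of $D$ equals $D$. For any $0\neq a\in D$ this gives $Da=D$, so $a$ is invertible. This argument uses that $D\neq 0$, which holds since domains are non-zero by convention. Hence $D$ is a field.

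The only step needing care is the cancellation $c=dc^2\Rightarrow dc=1$. This is precisely where the domain hypothesis is used a second time, beyond its use in Proposition~\ref{prop:domain-coprime-least}. The rest is immediate from the definition of the least ideal.
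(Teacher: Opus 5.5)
Your proof is correct and has the same outline as the paper's: Proposition~\ref{prop:domain-coprime-least} gives a least ideal $C$, you show $C=D$, and then every non-zero element generates $D$. The difference is in how you show $C=D$.

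The paper invokes Brauer's lemma. Since $C$ is minimal and $C^2\neq 0$ in a domain, $C=De$ for an idempotent $e$, and the only non-zero idempotent of a domain is $1$. You argue directly instead: $c\in C\subseteq Dc^2$ gives $c=dc^2$, and cancellation gives $dc=1$. This is essentially the core of Brauer's argument specialised to a domain. The element $e=dc\in C$ with $ec=c$ plays the role of Brauer's idempotent, and cancellation yields $e=1$ outright, with no need to prove idempotence first.

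Your route is self-contained and avoids citing a result that the paper quotes without proof. The paper's phrasing instead connects the corollary to the minimal/idempotent-ideal theme of Proposition~\ref{prop:minimal-idempotent-coprime}.

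Two small remarks. First, the observation $C=Dc$ is never used; you only need $c\in C\subseteq Dc^2$. Second, the domain hypothesis enters your middle step twice, for $Dc^2\neq 0$ and for the cancellation, not only at the cancellation as your closing remark suggests.
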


\begin{proof}
Let $C \in \operatorname{CoSp}(D)$. By Proposition~\ref{prop:domain-coprime-least}, $C$ is the least ideal of $D$. In particular, $C$ is a minimal non-zero ideal. We now apply Brauer's lemma, which states that if $M$ is a minimal ideal of a ring and $M^2 \neq 0$, then $M = De$ for some idempotent element $e \in D$.

Since $D$ is an integral domain and $C \neq 0$, we have $C^2 \neq 0$ (otherwise, $C$ would contain a non-zero nilpotent element, contradicting the domain property). Hence, by Brauer's lemma, there exists an idempotent $e \in D$ such that
\[
C = De.
\]
In an integral domain, the only idempotents are $0$ and $1$. Since $C \neq 0$, we must have $e = 1$. Therefore,
\[
C = D.
\]
Now, because $C$ is the least ideal, every non-zero ideal $I \subseteq D$ satisfies $C = D \subseteq I$, hence $I = D$. Thus the only non-zero ideal of $D$ is $D$ itself. Consequently, for every non-zero element $a \in D$, the ideal $\langle a \rangle$ is non-zero, so $\langle a \rangle = D$. This means $a$ is a unit. Therefore, every non-zero element of $D$ is invertible, so $D$ is a field.
\end{proof}

Corollary~\ref{cor:domain-field} shows that among integral domains, fields are precisely those with non-empty coprime spectrum. Combined with Proposition~\ref{prop:domain-coprime-least}, this gives a complete characterisation: in an integral domain, either the coprime spectrum is empty, or it consists of a single element, namely the whole ring $D$, and $D$ is a field.

\begin{corollary}\label{cor:domain-cosp-cardinality}
Let $D$ be an integral domain. Then $|\operatorname{CoSp}(D)| \leq 1$.
\end{corollary}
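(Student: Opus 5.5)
The plan is to read this off Proposition~\ref{prop:domain-coprime-least} by showing that a least ideal, when it exists, is unique. Suppose $C_1, C_2 \in \operatorname{CoSp}(D)$. By Proposition~\ref{prop:domain-coprime-least}, each $C_i$ is the least ideal of $D$ in the sense of Definition~\ref{def:least-ideal}. In particular, each is a non-zero ideal.

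The definition of least ideal then gives the two inclusions directly. Since $C_2$ is a non-zero ideal and $C_1$ is least, we get $C_1 \subseteq C_2$. Symmetrically, since $C_1$ is non-zero and $C_2$ is least, we get $C_2 \subseteq C_1$. Hence $C_1 = C_2$, so $\operatorname{CoSp}(D)$ has at most one element.

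As an alternative route, Corollary~\ref{cor:domain-field} together with its proof shows that any coprime ideal of $D$ equals $D$ itself. This yields the same conclusion immediately. I would still present the first argument, since it uses only Proposition~\ref{prop:domain-coprime-least} and avoids the appeal to Brauer's lemma.

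There is no genuine obstacle here. The only point to check is that Definition~\ref{def:least-ideal} quantifies over all non-zero ideals, so that it can be applied to the other coprime ideal; this holds because coprime ideals are non-zero by Definition~\ref{def:coprime}.
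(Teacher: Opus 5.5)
Your proof is correct and is essentially the paper's argument: both reduce to Proposition~\ref{prop:domain-coprime-least} together with the uniqueness of the least ideal. You additionally spell out that uniqueness by the mutual-inclusion argument, which the paper leaves implicit.
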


\begin{proof}
By Proposition~\ref{prop:domain-coprime-least}, every coprime ideal of an integral domain is the least ideal of $D$. Since the least ideal, when it exists, is unique, the coprime spectrum contains at most one element. Hence $|\operatorname{CoSp}(D)| \leq 1$.
\end{proof}

We now characterise when the whole ring is a coprime ideal.

\begin{proposition}\label{prop:ring-field-coprime}
Let $A$ be a commutative ring. Then $A$ is a field if and only if $A$ itself is a coprime ideal.
\end{proposition}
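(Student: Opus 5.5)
The plan is to prove both implications directly from Definition~\ref{def:coprime}, using Proposition~\ref{prop:coprod-basic} for the forward direction and Proposition~\ref{prop:coproduct-annihilator} (or equivalently Lemma~\ref{lem:coprime-characterization}) for the converse.

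$(\Rightarrow)$ Suppose $A$ is a field. Then $A\neq 0$ and its only ideals are $0$ and $A$. Let $I,J\subseteq A$ satisfy $A\subseteq [I:J]$, so that $[I:J]=A$. We split into two cases on $I$.
\begin{itemize}
\item If $I=A$, then $A\subseteq I$ and there is nothing to prove.
\item If $I=0$, then Proposition~\ref{prop:coprod-basic}(1) gives $[0:J]=J$. Hence $J=A$, and so $A\subseteq J$.
\end{itemize}
In either case $A$ is contained in $I$ or in $J$, so $A$ is coprime.

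$(\Leftarrow)$ Suppose $A$ is a coprime ideal. By definition this gives $A\neq 0$, which is the only place where the nonvanishing condition in Definition~\ref{def:coprime} matters. It rules out the zero ring.

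Now let $I$ be a proper ideal of $A$. By Proposition~\ref{prop:coproduct-annihilator} we have $[I:\ann(I)]=A$, so $A\subseteq [I:\ann(I)]$. Coprimality then gives $A\subseteq I$ or $A\subseteq \ann(I)$. The first option is excluded because $I$ is proper. Hence $\ann(I)=A$, so $1\in\ann(I)$, which forces $I=1\cdot I=0$. (Equivalently, Lemma~\ref{lem:coprime-characterization} gives $A\,\ann(I)=A$ for every proper $I$, with the same conclusion.)

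Thus the only proper ideal of $A$ is $0$. For any $a\neq 0$, the ideal $\langle a\rangle$ is nonzero and therefore equals $A$, so $a$ is a unit. Together with $A\neq 0$, this shows $A$ is a field.

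I do not expect a genuine obstacle in either direction. The only points requiring care are checking the condition $C\neq 0$ in Definition~\ref{def:coprime} on both sides, and choosing $J=\ann(I)$ in the converse so that the hypothesis $A\subseteq[I:J]$ holds automatically.
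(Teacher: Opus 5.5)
Your proof is correct and follows the paper's argument essentially verbatim: the forward direction uses the same case split $I\in\{0,A\}$ with $[0:J]=J$, and the converse uses the same choice $J=\ann(I)$ with $[I:\ann(I)]=A$ to force $\ann(I)=A$ and hence $I=0$. Your explicit remark that $A\neq 0$ comes from the non-vanishing clause in the definition of a coprime ideal, which rules out the zero ring, is a small point the paper leaves implicit.
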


\begin{proof}
$(\Rightarrow)$ Suppose $A$ is a field. Then the only ideals of $A$ are $0$ and $A$. Let $I,J \subseteq A$ be such that $A \subseteq [I:J]$. Since $[I:J]$ is an ideal and $A$ is the whole ring, this means $[I:J] = A$. We claim that $I = A$ or $J = A$. Suppose $I \neq A$, so $I = 0$. Then $\ann(I) = \ann(0) = A$, and hence
\[
[0:J] = (J:\ann(0)) = (J:A) = J,
\]
where the last equality holds because $J$ is an ideal. Thus $[0:J] = A$ implies $J = A$. Therefore, if $I \neq A$, then $J = A$. In all cases, $I = A$ or $J = A$, which means $A \subseteq I$ or $A \subseteq J$. Hence $A$ is coprime.

$(\Leftarrow)$ Conversely, suppose $A$ is a coprime ideal. We show that $A$ has no proper non-zero ideals. Let $I \subseteq A$ be a non-zero ideal. If $I$ is proper, set $J = \ann(I)$. By Proposition~\ref{prop:coproduct-annihilator}, we have $[I:J] = [I:\ann(I)] = A$. Since $A$ is coprime and $A \subseteq [I:J]$, we obtain
\[
A \subseteq I \quad \text{or} \quad A \subseteq J.
\]
The first inclusion would imply $I = A$, contradicting that $I$ is proper. Therefore $A \subseteq J = \ann(I)$, so $\ann(I) = A$. But then for every $a \in A$ and every $x \in I$, we have $ax = 0$; taking $a = 1$ yields $x = 0$ for all $x \in I$, so $I = 0$, contradicting that $I$ is non-zero. Thus no proper non-zero ideal exists. Hence every non-zero element is a unit, and $A$ is a field.
\end{proof}

Proposition~\ref{prop:ring-field-coprime} shows that the whole ring $A$ is a coprime ideal precisely when $A$ is a field. This should not be confused with Corollary~\ref{cor:domain-cosp-cardinality}, which states that an integral domain has at most one coprime ideal, and if it exists, it must be the whole ring (which forces $D$ to be a field by Corollary~\ref{cor:domain-field}). In other words, for integral domains, the existence of any coprime ideal is equivalent to the whole ring being coprime, and both are equivalent to the ring being a field.

We now introduce a topological structure on the coprime spectrum.

\begin{definition}\label{def:Vc}
Let $A$ be a ring and $I \subseteq A$. We define
\[
V_c(I) := \{\, C \in \operatorname{CoSp}(A) \mid C \subseteq I \,\}.
\]
\end{definition}

\begin{proposition}\label{prop:Vc-properties}
Let $A$ be a ring, $I, J \subseteq A$, and $\{I_\alpha\}_{\alpha \in \Gamma}$ a family of ideals of $A$. Then:
\begin{enumerate}
  \item $V_c(0) = \varnothing$;
  \item $V_c(A) = \operatorname{CoSp}(A)$;
  \item $V_c([I:J]) = V_c(I) \cup V_c(J)$;
  \item $\displaystyle V_c\!\left(\bigcap_{\alpha \in \Gamma} I_\alpha\right) = \bigcap_{\alpha \in \Gamma} V_c(I_\alpha)$.
\end{enumerate}
\end{proposition}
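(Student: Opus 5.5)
Each of the four items is checked directly from the definitions of $V_c$ and of coprime ideals, treating the four items separately.

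\emph{Items (1) and (2).} Coprime ideals are non-zero by Definition~\ref{def:coprime}, so no $C\in\operatorname{CoSp}(A)$ satisfies $C\subseteq 0$; this gives $V_c(0)=\varnothing$. Every ideal is contained in $A$, so $V_c(A)=\operatorname{CoSp}(A)$.

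\emph{Item (3).} This is the only item that uses coprimality, and it is the only step with any substance.
\begin{itemize}
\item[$\supseteq$:] By Proposition~\ref{pro:sum} (or Corollary~\ref{cor:order}) we have $I\subseteq I+J\subseteq[I:J]$ and $J\subseteq[I:J]$. Hence any $C$ with $C\subseteq I$ or $C\subseteq J$ satisfies $C\subseteq[I:J]$. Only the inclusions are needed here, not coprimality.
\item[$\subseteq$:] If $C\in\operatorname{CoSp}(A)$ and $C\subseteq[I:J]$, then Definition~\ref{def:coprime} gives $C\subseteq I$ or $C\subseteq J$. So $C\in V_c(I)\cup V_c(J)$.
\end{itemize}
I expect no real obstacle here. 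The only subtlety is that the coproduct replaces the product (or intersection) used for the Zariski topology, which is exactly why the definition of coprime was chosen this way.

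\emph{Item (4).} This is immediate because containment in an intersection is equivalent to containment in every member: $C\subseteq\bigcap_\alpha I_\alpha$ holds if and only if $C\subseteq I_\alpha$ for all $\alpha$. Hence $V_c\bigl(\bigcap_\alpha I_\alpha\bigr)=\bigcap_\alpha V_c(I_\alpha)$.

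Together, these items show that the sets $V_c(I)$ contain $\varnothing$ and $\operatorname{CoSp}(A)$ and are closed under finite unions and arbitrary intersections. They are therefore the closed sets of a topology, which is presumably the next use of this proposition.
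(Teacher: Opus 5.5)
Your proposal is correct and matches the paper's proof item by item. In particular, for item (3) you use coprimality for $\subseteq$ and the inclusions $I, J \subseteq I+J \subseteq [I:J]$ from Proposition~\ref{pro:sum} for $\supseteq$, and items (1), (2) and (4) follow directly from the definitions.
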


\begin{proof}
(1) By definition, $V_c(0)$ consists of coprime ideals $C$ such that $C \subseteq 0$. Since every coprime ideal is non-zero, no such $C$ exists. Hence $V_c(0) = \varnothing$.

(2) Since every ideal $C \subseteq A$ satisfies $C \subseteq A$, we have
\[
V_c(A) = \{ C \in \operatorname{CoSp}(A) \mid C \subseteq A \} = \operatorname{CoSp}(A).
\]

(3) We prove the two inclusions.

$(\subseteq)$ Let $C \in V_c([I:J])$. Then $C \subseteq [I:J]$. Since $C$ is a coprime ideal, by Definition~\ref{def:coprime} we have $C \subseteq I$ or $C \subseteq J$. Hence $C \in V_c(I) \cup V_c(J)$.

$(\supseteq)$ Conversely, suppose $C \in V_c(I) \cup V_c(J)$. Then either $C \subseteq I$ or $C \subseteq J$. Since by Proposition~\ref{pro:sum} we have $I+J \subseteq [I:J]$, it follows that $I \subseteq [I:J]$ and $J \subseteq [I:J]$. Hence $C \subseteq [I:J]$. Therefore $C \in V_c([I:J])$, and the equality holds.

(4) We again prove both inclusions.

$(\subseteq)$ Let $C \in V_c\!\left(\bigcap_{\alpha} I_\alpha\right)$. Then
\[
C \subseteq \bigcap_{\alpha \in \Gamma} I_\alpha.
\]
Thus, for each $\alpha \in \Gamma$, we have $C \subseteq I_\alpha$, so $C \in V_c(I_\alpha)$ for every $\alpha$. Therefore
\[
C \in \bigcap_{\alpha \in \Gamma} V_c(I_\alpha).
\]

$(\supseteq)$ Conversely, suppose
\[
C \in \bigcap_{\alpha \in \Gamma} V_c(I_\alpha).
\]
Then $C \subseteq I_\alpha$ for every $\alpha \in \Gamma$, which is equivalent to
\[
C \subseteq \bigcap_{\alpha \in \Gamma} I_\alpha.
\]
Hence $C \in V_c\!\left(\bigcap_{\alpha} I_\alpha\right)$. This proves (4).
\end{proof}

The properties established in Proposition~\ref{prop:Vc-properties} show that the family of sets
\[
\mathcal{F} := \{\, V_c(I) \mid I \subseteq A \,\}
\]
satisfies the axioms for closed sets of a topology on $\operatorname{CoSp}(A)$:
\begin{itemize}
  \item $\varnothing$ and $\operatorname{CoSp}(A)$ are in $\mathcal{F}$ (by (1) and (2));
  \item $\mathcal{F}$ is closed under finite unions (by (3), extended by induction);
  \item $\mathcal{F}$ is closed under arbitrary intersections (by (4)).
\end{itemize}
Therefore, we can view $\operatorname{CoSp}(A)$ as a topological space, called the \emph{coprime spectrum} of $A$, whose closed sets are precisely the $V_c(I)$ for ideals $I$ of $A$. This topology is analogous to the Zariski topology on the prime spectrum \cite{atiyah2018introduction}.

\begin{proposition}\label{prop:cosp-trivial-extension}
Let $p$ be a prime, let $M=\mathbb Z_{p^\infty}$ be the Pr\"ufer $p$-group, and let
\[
A=\mathbb Z\ltimes M
\]
be the trivial extension of $\mathbb Z$ by $M$, with multiplication
\[
(a,x)(b,y)=(ab,\,ay+bx).
\]
For $n\ge 1$, let $C_{p^n}$ denote the unique subgroup of $M$ of order $p^n$. Then
\[
\operatorname{CoSp}(A)=\{\,C_p,\;M\,\},
\]
where $C_p$ denotes the unique subgroup of $M$ of order $p$.
\end{proposition}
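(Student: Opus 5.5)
The plan is to list all ideals of $A=\mathbb Z\ltimes M$ explicitly, compute their annihilators, and then test each non-zero ideal against the criterion of Lemma~\ref{lem:coprime-characterization}: $C\neq 0$ is coprime if and only if $C\ann(I)=C$ for every ideal $I$ with $C\not\subseteq I$. Throughout, I identify a subgroup $N\subseteq M$ with the ideal $0\oplus N$; in particular $C_p$ means $0\oplus C_p$ and $M$ means $0\oplus M$.

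\emph{Step 1: the ideals.} By Proposition~\ref{prop:ideals-trivial-extension}, the ideals are $I\oplus N$ with $I=n\mathbb Z$ and $nM\subseteq N$. If $n\neq 0$, divisibility (Proposition~\ref{prop:prufer-subgroups}(2)) gives $nM=M$, which forces $N=M$. If $n=0$, then $N$ is arbitrary. Hence, by Proposition~\ref{prop:prufer-subgroups}(1), the ideals are exactly
\[
0,\qquad 0\oplus C_{p^k}\ (k\ge1),\qquad 0\oplus M,\qquad n\mathbb Z\oplus M\ (n\ge1),
\]
and the last family includes $A$ itself (the case $n=1$).

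\emph{Step 2: annihilators and products.} From $(a,x)(0,y)=(0,ay)$ we get $\ann(0\oplus N)=\{a:aN=0\}\oplus M$. This gives
\[
\ann(0\oplus C_{p^k})=p^k\mathbb Z\oplus M,\qquad \ann(0\oplus M)=0\oplus M.
\]
From $(a,x)(n,y)=(an,ay+nx)$ with $n\ge1$, a zero product forces $a=0$ and $nx=0$. So $\ann(n\mathbb Z\oplus M)=0\oplus M[n]$, where $M[n]=\{x\in M:nx=0\}$, and $\ann(0)=A$. The products needed below are
\[
(0\oplus N)(m\mathbb Z\oplus M)=0\oplus mN,\qquad (n\mathbb Z\oplus M)(0\oplus M)=0\oplus nM=0\oplus M.
\]

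\emph{Step 3: case check with the Lemma.}
\begin{itemize}
\item $C=0\oplus C_p$ is coprime. The only ideal not containing it is $0$, and $C\cdot\ann(0)=C\cdot A=C$.
\item $C=0\oplus M$ is coprime. The ideals not containing it are $0$ and $0\oplus C_{p^k}$. For the latter, $C\cdot(p^k\mathbb Z\oplus M)=0\oplus p^kM=0\oplus M$, again by divisibility.
\item $C=0\oplus C_{p^k}$ with $k\ge2$ is not coprime. Take $I=0\oplus C_p$, which does not contain $C$. Then $C\ann(I)=0\oplus pC_{p^k}=0\oplus C_{p^{k-1}}\neq C$.
\item $C=n\mathbb Z\oplus M$ with $n\ge1$ is not coprime. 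Take $I=0\oplus M$, which does not contain $C$. Then $C\ann(I)=0\oplus M\neq C$.
\end{itemize}
This leaves exactly $\{C_p,M\}$.

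The main obstacle is Step 1 together with the annihilator computations, where divisibility is used essentially. I expect the subtle point to be the check for $M$: every ideal not containing $0\oplus M$ must be of the form $0\oplus C_{p^k}$ or $0$, and multiplying $0\oplus M$ by $p^k\mathbb Z\oplus M$ must return all of $0\oplus M$ rather than a smaller subgroup.
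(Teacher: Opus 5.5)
Your proof is correct and follows the paper's route: you use the same list of ideals, the same annihilators, the same verdicts and essentially the same witnesses (for $C_{p^k}$ with $k\ge 2$, your $I=C_p$, with implicit $J=C\ann(I)=C_{p^{k-1}}$, is the paper's pair $I=C_{p^{k-1}}$, $J=C_p$ with the roles swapped). The only difference is that you test coprimality via Lemma~\ref{lem:coprime-characterization} instead of the bare definition, which reduces the checks for $C_p$ and $M$ to inspecting the few ideals not containing $C$.

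One caveat is shared with the paper: in general Proposition~\ref{prop:ideals-trivial-extension} describes only the homogeneous ideals of a trivial extension (in $\mathbb Z\ltimes\mathbb Z/2\mathbb Z$ the ideal generated by $(2,\bar 1)$ is $\{(2s,\bar s)\}$, which does not contain $(2,\bar 0)$). So Step~1 is better justified directly: if $(a,x)\in K$ with $a\neq 0$, then $K\supseteq (a,x)(0\oplus M)=0\oplus aM=0\oplus M$, hence $(a,0)\in K$, and every ideal of this particular $A$ has the form $I\oplus N$.
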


\begin{proof}
By Proposition~\ref{prop:ideals-trivial-extension}, the ideals of $A$ correspond to pairs $(d\mathbb Z,N)$, where $d\ge 0$, $N$ is a subgroup of $M$, and $d\mathbb Z\cdot M\subseteq N$. If $d\neq 0$, then $dM=M$ because $M$ is divisible (Proposition~\ref{prop:prufer-subgroups}), so the condition $dM\subseteq N$ forces $N=M$. Hence the non-zero ideals of $A$, up to the natural identification of subgroups of $M$ with ideals $0\oplus N$, are
\[
C_{p^n}\ (n\ge 1),\qquad M,\qquad d\mathbb Z\oplus M\ (d>0).
\]

A direct computation using the multiplication rule gives the following annihilators:
\[
\begin{aligned}
\ann(0)&=A,\\
\ann(C_{p^n})&=p^n\mathbb Z\oplus M,\\
\ann(M)&=M,\\
\ann(d\mathbb Z\oplus M)&=0\oplus C_{p^{v_p(d)}}\quad (d>0),
\end{aligned}
\]
where $v_p(d)$ is the $p$-adic valuation of $d$.

We now determine which ideals are coprime.

First, $M$ is coprime. Suppose $M\subseteq [I:J]$, that is, $M\cdot\ann(I)\subseteq J$. If $\ann(I)$ contains an element $(a,x)$ with $a\neq 0$, then $M\cdot\ann(I)=M$, because $aM=M$. Hence $M\subseteq J$. If every element of $\ann(I)$ has zero $\mathbb Z$-component, then $\ann(I)\subseteq M$. Looking at the list of ideals, this happens only for $I=d\mathbb Z\oplus M$ or $I=M$, and in both cases $M\subseteq I$. Therefore $M\subseteq I$ or $M\subseteq J$, so $M$ is coprime.

Second, $C_p$ is coprime. Let $C=C_p$ and suppose $C\subseteq [I:J]$, that is, $C\cdot\ann(I)\subseteq J$. Checking the list of ideals, we see that $C\cdot\ann(I)$ is either $0$ or $C$. If $C\cdot\ann(I)=C$, then $C\subseteq J$. If $C\cdot\ann(I)=0$, then $I\neq 0$ (otherwise $\ann(I)=A$ and $C\cdot\ann(I)=C$, a contradiction). In this case $I$ contains $C$: if $I=C_{p^n}$ with $n\ge 1$, then $C_p\subseteq C_{p^n}=I$; if $I=d\mathbb Z\oplus M$ with $d>0$, then $M\subseteq I$, hence $C_p\subseteq I$; and if $I=M$, then $C_p\subseteq M=I$. Thus $C\subseteq I$. Therefore in all cases $C\subseteq I$ or $C\subseteq J$, so $C_p$ is coprime.

Finally, we show that no other non-zero ideal is coprime.

- For $n\ge 2$, take $I=C_{p^{n-1}}$ and $J=C_p$. Then
  \[
  C_{p^n}\cdot\ann(I)=p^{n-1}C_{p^n}=C_p\subseteq J,
  \]
  so $C_{p^n}\subseteq [I:J]$. However,
  \[
  C_{p^n}\not\subseteq C_{p^{n-1}}=I
  \quad\text{and}\quad
  C_{p^n}\not\subseteq C_p=J.
  \]
  Hence $C_{p^n}$ is not coprime.

- For $d>0$, take $I=J=M$. Then $\ann(M)=M$, and
  \[
  [M:M]=(M:M)=A.
  \]
  Thus $d\mathbb Z\oplus M\subseteq A=[M:M]$, but
  \[
  d\mathbb Z\oplus M\not\subseteq M.
  \]
  Hence $d\mathbb Z\oplus M$ is not coprime.

- The zero ideal is not coprime by definition.

Therefore
\[
\operatorname{CoSp}(A)=\{C_p,M\}.
\]
\end{proof}

\begin{corollary}\label{cor:cosp-trivial-extension-sierpinski}
In the situation of Proposition~\ref{prop:cosp-trivial-extension}, the coprime spectrum
\[
\operatorname{CoSp}(A)=\{C_p,M\}
\]
is homeomorphic to the Sierpinski space. Indeed, its closed sets are
\[
\varnothing,\qquad V_c(C_p)=\{C_p\},\qquad V_c(M)=\operatorname{CoSp}(A),
\]
so its open sets are
\[
\varnothing,\qquad \{M\},\qquad \operatorname{CoSp}(A).
\]
Thus $M$ is the generic point and $C_p$ is the unique closed point.
\end{corollary}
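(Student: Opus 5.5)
The plan is to compute the topology directly from Proposition~\ref{prop:cosp-trivial-extension}, which gives $\operatorname{CoSp}(A)=\{C_p,M\}$. By definition the closed sets are the sets $V_c(I)$, where $I$ ranges over all ideals of $A$. So we must determine $V_c(I)$ for each ideal in the list used in the proof of Proposition~\ref{prop:cosp-trivial-extension}. These are $0$, $C_{p^n}$ for $n\ge 1$, $M$, and $d\mathbb Z\oplus M$ for $d>0$. The unit ideal $A$ is the case $d=1$.

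We check containments one case at a time.
\begin{itemize}
\item For $I=0$, Proposition~\ref{prop:Vc-properties}(1) gives $V_c(0)=\varnothing$.
\item For $I=C_{p^n}$ with $n\ge 1$: we have $C_p\subseteq C_{p^n}$ by the chain in Proposition~\ref{prop:prufer-subgroups}(1). On the other hand $M\not\subseteq C_{p^n}$, since $C_{p^n}$ is finite and $M$ is infinite. Hence $V_c(C_{p^n})=\{C_p\}$.
\item For $I=M$ or $I=d\mathbb Z\oplus M$: both $C_p$ and $M$ are contained in $I$, so $V_c(I)=\operatorname{CoSp}(A)$.
\end{itemize}
Hence the family of closed sets is exactly $\{\varnothing,\{C_p\},\operatorname{CoSp}(A)\}$. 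Taking complements, the open sets are $\varnothing$, $\{M\}$ and $\operatorname{CoSp}(A)$.

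Next we identify this with the Sierpi\'nski space. Recall that the Sierpi\'nski space is the two-point set $\{0,1\}$ whose open sets are $\varnothing$, $\{1\}$ and $\{0,1\}$. The bijection $M\mapsto 1$, $C_p\mapsto 0$ matches the open sets exactly, so it is a homeomorphism. For the closure claims: the smallest closed set containing $M$ is the whole space, so the closure of $\{M\}$ is $\operatorname{CoSp}(A)$ and $M$ is the generic point. The set $\{C_p\}$ is itself closed. The point $M$ is not closed, because $\{M\}$ is not among the closed sets. Therefore $C_p$ is the unique closed point.

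The only step needing any care is exhaustiveness: we must be sure that every ideal of $A$ has been considered. This follows from the classification of ideals in the proof of Proposition~\ref{prop:cosp-trivial-extension}, which uses Proposition~\ref{prop:ideals-trivial-extension} and the divisibility of $M$. We also need the fact that $M\not\subseteq C_{p^n}$, which holds because $C_{p^n}$ is a proper subgroup of $M$.
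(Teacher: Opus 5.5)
Your proof is correct, and like the paper it computes the closed sets $V_c(I)$ directly and takes complements. The difference is in how you make sure no other closed set occurs.

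You enumerate every ideal of $A$ and compute $V_c$ for each one. The paper only writes down $V_c(0)$, $V_c(C_p)$, $V_c(M)$ and $V_c(A)$, and exhaustiveness rests, implicitly, on the inclusion $C_p\subseteq M$. That inclusion settles the question without any knowledge of the ideal lattice. If $M\subseteq I$ then $C_p\subseteq I$, so $\{M\}$ is never of the form $V_c(I)$. Every closed set is therefore one of $\varnothing$, $\{C_p\}$, $\operatorname{CoSp}(A)$, and these are realised by $I=0$, $C_p$, $A$. So the step you flag as ``the only step needing any care'' is in fact automatic.

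Your enumeration does give more information, namely the value of $V_c(I)$ for every ideal $I$. However, it makes the argument depend on the classification of ideals, and you should not justify that classification via Proposition~\ref{prop:ideals-trivial-extension}. As stated, that proposition is false for general trivial extensions. For example, in $\mathbb Z\ltimes\mathbb Z$ the ideal generated by $(2,1)$ is $\{(2a,\,a+2b)\}$, which contains $(2,1)$ but not $(2,0)$, so it is not of the form $I\oplus N$.

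The list you use is nevertheless correct for this particular ring, by a direct divisibility argument. Suppose the projection of an ideal $J$ onto $\mathbb Z$ is $d\mathbb Z$ with $d>0$, and pick $(d,m)\in J$. Then $(0,x)(d,m)=(0,dx)$ for $x\in M$, so $0\oplus dM=0\oplus M\subseteq J$. Hence $(d,0)\in J$ and $J=d\mathbb Z\oplus M$. If instead the projection is $0$, then $J=0\oplus N$ for some subgroup $N$ of $M$.
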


\begin{proof}
By Proposition~\ref{prop:cosp-trivial-extension}, the only coprime ideals are $C_p$ and $M$, and $C_p\subseteq M$. Therefore
\[
V_c(C_p)=\{C\in\operatorname{CoSp}(A)\mid C\subseteq C_p\}=\{C_p\},
\]
while
\[
V_c(M)=\{C\in\operatorname{CoSp}(A)\mid C\subseteq M\}=\{C_p,M\}=\operatorname{CoSp}(A).
\]
Together with $V_c(0)=\varnothing$ and $V_c(A)=\operatorname{CoSp}(A)$, these are exactly the closed sets of the Sierpinski topology on a two-point set. The corresponding open sets are their complements:
\[
\operatorname{CoSp}(A),\qquad \{M\},\qquad \varnothing.
\]
Hence the space is Sierpinski. The point $M$ is generic and $C_p$ is the unique closed point.
\end{proof}

We now study how the coproduct behaves with respect to epimorphisms that do not annihilate coprime ideals. Recall that an epimorphism of commutative rings is a ring morphism $f\colon A\to B$ such that for any two morphisms $g,h\colon B\to C$, the equality $g\circ f=h\circ f$ implies $g=h$. Every surjective morphism is an epimorphism, but the converse fails; for example, the inclusion $\mathbb Z\hookrightarrow \mathbb Q$ is an epimorphism.

\begin{definition}\label{def:non-annihilating-epi}
Let $f\colon A\to B$ be an epimorphism of commutative rings. We say that $f$ \emph{does not annihilate coprime ideals} if for every $C\in\operatorname{CoSp}(A)$, the ideal generated by $f(C)$ in $B$ is a non-zero proper ideal of $B$. We denote this ideal by $f(C)B$.
\end{definition}

We first record the behaviour of coprimality under localizations.

\begin{lemma}\label{lem:localization-coprime}
Let $A$ be a ring, $S\subseteq A$ a multiplicative set, and $B=S^{-1}A$. Let $\varphi\colon A\to B$ be the localization map. If $C$ is a coprime ideal of $A$ such that $C\cap S=\varnothing$ and $S^{-1}C\neq 0$, then $S^{-1}C$ is a coprime ideal of $B$.
\end{lemma}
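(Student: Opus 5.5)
We will verify coprimality of $S^{-1}C$ through Lemma~\ref{lem:coprime-characterization}, which reduces the problem to an identity of the form $C\ann(I)=C$. That identity transports well under localization, because localization commutes with products of ideals. By hypothesis $S^{-1}C\neq 0$. So it suffices to show the following: for every ideal $I'\subseteq B$ with $S^{-1}C\not\subseteq I'$ we have
\[
(S^{-1}C)\,\ann_B(I')=S^{-1}C .
\]

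First we pull $I'$ back to $A$. Put $I:=\varphi^{-1}(I')$. By the standard correspondence for ideals of a localization, $I'=S^{-1}I$. If $C\subseteq I$, then $S^{-1}C\subseteq S^{-1}I=I'$, which is excluded. Hence $C\not\subseteq I$. Since $C$ is coprime in $A$, Lemma~\ref{lem:coprime-characterization} gives $C\ann_A(I)=C$.

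Next we localize this identity. Localization commutes with products of ideals, so
\[
S^{-1}C=S^{-1}\bigl(C\ann_A(I)\bigr)=(S^{-1}C)\,\bigl(S^{-1}\ann_A(I)\bigr).
\]
We then compare $S^{-1}\ann_A(I)$ with $\ann_B(I')$, using only the easy inclusion $S^{-1}\ann_A(I)\subseteq\ann_B(S^{-1}I)$. Indeed, if $aI=0$, then $(a/s)(x/t)=ax/(st)=0$ for all $x\in I$ and all $s,t\in S$. Combining these,
\[
S^{-1}C=(S^{-1}C)\,S^{-1}\ann_A(I)\subseteq (S^{-1}C)\,\ann_B(I')\subseteq S^{-1}C,
\]
so equality holds throughout. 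By Lemma~\ref{lem:coprime-characterization}, $S^{-1}C$ is coprime in $B$.

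The main point of care is the annihilator comparison. The reverse inclusion $\ann_B(S^{-1}I)\subseteq S^{-1}\ann_A(I)$ can fail when $I$ is not finitely generated. The argument above avoids it, because the chain needs only the easy direction together with the trivial containment $(S^{-1}C)\ann_B(I')\subseteq S^{-1}C$.

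The hypothesis $C\cap S=\varnothing$ is not used in this argument. It only guarantees that $S^{-1}C$ is proper, which is what matters for the later notion of not annihilating coprime ideals. The nonvanishing hypothesis $S^{-1}C\neq 0$ is exactly what the definition of a coprime ideal requires.
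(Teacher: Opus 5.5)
Your proof is correct, but it is organized differently from the paper's. The paper works directly from Definition~\ref{def:coprime}. Given $I,J\subseteq B$ with $S^{-1}C\subseteq[I:J]$, it proves the contraction inclusion $\varphi^{-1}[I:J]\subseteq[\varphi^{-1}(I):\varphi^{-1}(J)]$. Pulling back gives $C\subseteq\varphi^{-1}(I)$ or $C\subseteq\varphi^{-1}(J)$, and extending again finishes the argument. You instead pass through Lemma~\ref{lem:coprime-characterization} and push forward the identity $C\ann_A(I)=C$, using that extension of ideals commutes with products.

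Both arguments rest on the same easy inclusion $\varphi\bigl(\ann_A(\varphi^{-1}(I'))\bigr)\subseteq\ann_B(I')$. This needs only that every ideal of $B$ is extended from its contraction. Neither argument needs the reverse annihilator inclusion, and, as you correctly observe, neither uses $C\cap S=\varnothing$.

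The paper's route produces a standalone statement about how the coproduct behaves under contraction. It reuses that statement verbatim for surjections in Proposition~\ref{prop:image-coprime-surjective}. Your route is shorter and shows that, in its operational form, coprimality is an identity of ideals. That identity survives extension along any ring map whose target ideals are all extended ideals, so your argument covers the surjective case with no change.
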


\begin{proof}
Let $I,J\subseteq B$ be ideals such that $S^{-1}C\subseteq [I:J]$. Write $I=S^{-1}I_0$ and $J=S^{-1}J_0$ with $I_0=\varphi^{-1}(I)$ and $J_0=\varphi^{-1}(J)$. We claim that
\[
\varphi^{-1}[I:J]\subseteq [\varphi^{-1}(I):\varphi^{-1}(J)].
\]
Indeed, let $x\in \varphi^{-1}[I:J]$. Then $\varphi(x)\in [I:J]=(J:\ann(I))$, so $\varphi(x)\ann(I)\subseteq J$. Let $a\in \ann(\varphi^{-1}(I))$. We must show that $ax\in \varphi^{-1}(J)$, that is, $\varphi(a)\varphi(x)\in J$. Since $\varphi(x)\ann(I)\subseteq J$, it suffices to prove that $\varphi(a)\in \ann(I)$.

Let $y\in I$. Since $I=S^{-1}\varphi^{-1}(I)$, we may write $y=\varphi(b)/\varphi(s)$ with $b\in \varphi^{-1}(I)$ and $s\in S$. Then
\[
\varphi(a)\,y=\frac{\varphi(a)\varphi(b)}{\varphi(s)}=\frac{\varphi(ab)}{\varphi(s)}=0,
\]
because $ab=0$ by the choice of $a$. Hence $\varphi(a)\in \ann(I)$, proving the claim.

Now
\[
C\subseteq \varphi^{-1}(S^{-1}C)\subseteq \varphi^{-1}[I:J]\subseteq [\varphi^{-1}(I):\varphi^{-1}(J)].
\]
Since $C$ is coprime, we have $C\subseteq \varphi^{-1}(I)$ or $C\subseteq \varphi^{-1}(J)$. Applying $\varphi$ gives $S^{-1}C\subseteq I$ or $S^{-1}C\subseteq J$. Hence $S^{-1}C$ is coprime.
\end{proof}

\begin{proposition}\label{prop:image-coprime-surjective}
Let $A$ and $B$ be rings and $f\colon A\to B$ a surjective ring morphism. If $C$ is a coprime ideal of $A$ with $C\nsubseteq \ker f$, then $f(C)$ is a coprime ideal of $B$.
\end{proposition}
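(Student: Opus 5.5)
The plan is to mimic the proof of Lemma~\ref{lem:localization-coprime}: pull ideals of $B$ back along $f$, use coprimality of $C$ in $A$, and push the conclusion forward again. Two preliminary observations come first. Since $f$ is surjective, $f(C)$ is already an ideal of $B$. Since $C\nsubseteq\ker f$, we have $f(C)\neq 0$. So $f(C)$ satisfies the non-vanishing requirement of Definition~\ref{def:coprime}, and only the coprimality condition remains to be checked.

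The key step is the inclusion
\[
f^{-1}[I:J]\subseteq [f^{-1}(I):f^{-1}(J)]
\]
for all ideals $I,J\subseteq B$. To prove it, take $x$ with $f(x)\ann(I)\subseteq J$ and $a\in\ann(f^{-1}(I))$; we must show $ax\in f^{-1}(J)$. It suffices to show $f(a)\in\ann(I)$, because then $f(ax)=f(x)f(a)\in J$. Let $y\in I$. By surjectivity $y=f(b)$ for some $b$, and then $b\in f^{-1}(I)$. Hence $ab=0$, so $f(a)y=f(ab)=0$. This is the one place where surjectivity does real work, and it plays the role of writing $y=\varphi(b)/\varphi(s)$ in the localization lemma. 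I expect this verification to be the main (though mild) obstacle, since the annihilator is contravariant and one must check that it transports correctly under $f$.

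With the inclusion in hand, suppose $f(C)\subseteq[I:J]$. Then
\[
C\subseteq f^{-1}(f(C))\subseteq f^{-1}[I:J]\subseteq[f^{-1}(I):f^{-1}(J)].
\]
Coprimality of $C$ gives $C\subseteq f^{-1}(I)$ or $C\subseteq f^{-1}(J)$. Applying $f$ and using $f(f^{-1}(I))=I$ (again by surjectivity) yields $f(C)\subseteq I$ or $f(C)\subseteq J$. Hence $f(C)$ is coprime.
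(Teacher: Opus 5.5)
Your proof is correct and matches the paper's argument: you pull back along $f$ using the inclusion $f^{-1}[I:J]\subseteq[f^{-1}(I):f^{-1}(J)]$, apply coprimality of $C$, and push forward, with $C\nsubseteq\ker f$ giving $f(C)\neq 0$. The one difference is that you prove this inclusion explicitly for surjective $f$ by writing $y=f(b)$, whereas the paper only refers back to the argument in the localization lemma.
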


\begin{proof}
Let $I,J\subseteq B$ be such that $f(C)\subseteq [I:J]$. As in the proof of Lemma~\ref{lem:localization-coprime}, one has
\[
f^{-1}[I:J]\subseteq [f^{-1}(I):f^{-1}(J)].
\]
Hence $C\subseteq f^{-1}(f(C))\subseteq f^{-1}[I:J]\subseteq [f^{-1}(I):f^{-1}(J)]$. Since $C$ is coprime, $C\subseteq f^{-1}(I)$ or $C\subseteq f^{-1}(J)$. Applying $f$ and using surjectivity gives $f(C)\subseteq I$ or $f(C)\subseteq J$. Moreover, $f(C)\neq 0$ because $C\nsubseteq \ker f$. Thus $f(C)$ is coprime.
\end{proof}

The hypothesis $C\nsubseteq \ker f$ in Proposition~\ref{prop:image-coprime-surjective} is essential. Without it, the image of a coprime ideal could be the zero ideal, which is not coprime by definition. For example, let $F$ be a field, $A = F[x]/(x^2)$, $B = F$, and let $f\colon A \to F$ be the natural surjection with kernel $(x)$. The ideal $C = (x)$ is coprime (indeed, the only non-zero proper ideal of $A$ is $(x)$, and one checks directly that $C \subseteq [I:J]$ implies $C \subseteq I$ or $C \subseteq J$ for all ideals $I,J$). However, $C \subseteq \ker f$, so $f(C) = 0$, which is not coprime.

We now extend this result to epimorphisms that do not annihilate coprime ideals.

\begin{theorem}\label{thm:image-coprime-epi}
Let $f\colon A\to B$ be an epimorphism that does not annihilate coprime ideals. Then for every $C\in\operatorname{CoSp}(A)$, the ideal $f(C)B$ is a coprime ideal of $B$.
\end{theorem}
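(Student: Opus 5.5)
The plan is to follow the pattern of Lemma~\ref{lem:localization-coprime} and Proposition~\ref{prop:image-coprime-surjective}: pull the coproduct back along $f$, use coprimality of $C$ in $A$, and push the resulting containment forward.

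First, non-zeroness of $f(C)B$ is part of the hypothesis that $f$ does not annihilate coprime ideals, so only the coprime condition of Definition~\ref{def:coprime} needs checking. Let $I,J\subseteq B$ with $f(C)B\subseteq [I:J]$. Then
\[
C\subseteq f^{-1}(f(C)B)\subseteq f^{-1}[I:J].
\]
Suppose the key inclusion
\[
f^{-1}[I:J]\subseteq [f^{-1}(I):f^{-1}(J)]
\]
holds. Coprimality of $C$ then gives $C\subseteq f^{-1}(I)$ or $C\subseteq f^{-1}(J)$, hence $f(C)\subseteq I$ or $f(C)\subseteq J$. Since $I,J$ are ideals of $B$, this yields $f(C)B\subseteq I$ or $f(C)B\subseteq J$. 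This final step needs neither surjectivity nor any extension property.

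Everything therefore reduces to the key inclusion. As in the proof of Lemma~\ref{lem:localization-coprime}, take $x$ with $f(x)\ann(I)\subseteq J$ and $a\in\ann(f^{-1}(I))$. It suffices to show $f(a)\in\ann(I)$, i.e.\ that $f(a)y=0$ for every $y\in I$. In the surjective and localization cases this was immediate, because each $y\in I$ is built from elements of $f(f^{-1}(I))$. For a general epimorphism I would first try to prove
\[
\ann_B(I)\supseteq f\bigl(\ann_A(f^{-1}(I))\bigr).
\]
The tool would be the zig-zag characterization of epimorphisms (Mazet--Silver): every $y\in B$ can be written as $y=XYZ$, where $X$ is a row and $Z$ a column with entries in $B$, $Y$ is a matrix with entries in $f(A)$, and $XY$ and $YZ$ have entries in $f(A)$. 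The idea is to use the relation $f(a)f(f^{-1}(I))=0$ to move the factor $f(a)$ through such a factorization and conclude $f(a)y=0$. An alternative route is the identity $B\otimes_A B\cong B$: the element $f(a)\otimes y-1\otimes f(a)y$ vanishes there, and one would try to exploit this.

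I expect the main obstacle to be exactly this annihilator step. For non-flat epimorphisms an ideal $I$ of $B$ need not equal $f(f^{-1}(I))B$, so the argument from the localization and surjective cases does not transfer verbatim. If the zig-zag manipulation does not close, the fallback is to prove the weaker statement that suffices here. Since $C\subseteq f^{-1}[I:J]$, it is enough to obtain $C\,\ann(f^{-1}(I))\subseteq f^{-1}(J)$, i.e.\ $f(c)f(a)\in J$ for $c\in C$. One would try to derive this from Lemma~\ref{lem:coprime-characterization}: in the case $C\not\subseteq f^{-1}(I)$ it gives $C=C\ann(f^{-1}(I))$, and one would combine this with the assumed containment $f(C)\ann(I)\subseteq J$.
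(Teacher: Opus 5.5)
\textbf{There is a genuine gap.} Your reduction is sound: non-zeroness is part of the hypothesis, and the push-forward step needs neither surjectivity nor any extension property. But the whole argument rests on the inclusion $f^{-1}[I:J]\subseteq [f^{-1}(I):f^{-1}(J)]$, which you reduce to $f(\ann_A(f^{-1}(I)))\subseteq \ann_B(I)$. For a general epimorphism both statements are false, so neither zig-zags nor $B\otimes_A B\cong B$ can produce them.

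Take $f\colon \mathbb{Z}\to B=\mathbb{F}_p\times\mathbb{Z}[1/p]$, $n\mapsto(\bar n,n)$. This is an epimorphism because both components are and $\mathbb{F}_p\otimes_{\mathbb{Z}}\mathbb{Z}[1/p]=0$. With $I=\mathbb{F}_p\times 0$ and $J=0\times\mathbb{Z}[1/p]$ one has $\ann_B(I)=J$, hence $[I:J]=(J:J)=B$ and $f^{-1}[I:J]=\mathbb{Z}$. On the other hand $f^{-1}(I)=0$ and $f^{-1}(J)=p\mathbb{Z}$, so $[0:p\mathbb{Z}]=p\mathbb{Z}$. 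Also $1\in\ann(f^{-1}(I))$, while $f(1)=(1,1)\notin\ann_B(I)$.

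The inclusion does not involve $C$ at all, so any proof for arbitrary epimorphisms has to use the coprimality of $C$ in an essential way. Your fallback does not do this; it is circular. If $C\not\subseteq f^{-1}(I)$, Lemma~\ref{lem:coprime-characterization} gives $C\ann(f^{-1}(I))=C$. So the containment $C\ann(f^{-1}(I))\subseteq f^{-1}(J)$ you want to establish is literally $f(C)\subseteq J$, which is the conclusion itself.

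The paper does not take your direct route. It asserts that every epimorphism factors as $A\to A/\ker f\to S^{-1}(A/\ker f)=B$. It then applies Proposition~\ref{prop:image-coprime-surjective} and Lemma~\ref{lem:localization-coprime}, which are exactly the two cases where you observed the pull-back works. Along such a factorization your argument and the paper's coincide.

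This factorization is not available in general. The epimorphism above is injective, so the factorization would make $B$ a localization of $\mathbb{Z}$, but $B$ is not a domain. What the pull-back argument genuinely proves is the statement for epimorphisms that are a surjection followed by a flat epimorphism. For a flat epimorphism every ideal $J$ of $B$ equals $f^{-1}(J)B$, and this gives your annihilator inclusion.

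So your suspicion that non-flat epimorphisms are the real difficulty is well founded. That case is covered neither by your proposal nor by the factorization the paper relies on.
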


\begin{proof}
Since $f$ is an epimorphism of commutative rings, it factors as
\[
A \xrightarrow{\ \pi\ } A/\ker f \xrightarrow{\ \ell\ } B,
\]
where $\pi$ is the canonical surjection and $\ell$ is a localization $A/\ker f\to S^{-1}(A/\ker f)=B$ for some multiplicative set $S\subseteq A/\ker f$. Let $C\in\operatorname{CoSp}(A)$. Put $\bar C=\pi(C)$. By hypothesis, $f(C)B\neq 0$ and is proper, so $\bar C\neq 0$, $\bar C\cap S=\varnothing$, and $S^{-1}\bar C\neq 0$. By Proposition~\ref{prop:image-coprime-surjective}, $\bar C$ is a coprime ideal of $A/\ker f$. Then Lemma~\ref{lem:localization-coprime} implies that $S^{-1}\bar C$ is a coprime ideal of $B$. But $S^{-1}\bar C=f(C)B$, so $f(C)B$ is coprime.
\end{proof}

\begin{definition}\label{def:cosp-functor}
Let $f\colon A\to B$ be an epimorphism that does not annihilate coprime ideals. Define
\[
\operatorname{CoSp}(f)\colon \operatorname{CoSp}(A)\longrightarrow \operatorname{CoSp}(B),
\qquad C\longmapsto f(C)B.
\]
\end{definition}

\begin{proposition}\label{prop:cosp-continuous}
Let $f\colon A\to B$ be an epimorphism that does not annihilate coprime ideals. Then the induced map $\operatorname{CoSp}(f)\colon \operatorname{CoSp}(A)\to \operatorname{CoSp}(B)$ is continuous.
\end{proposition}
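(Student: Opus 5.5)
Write $F=\operatorname{CoSp}(f)$, so $F(C)=f(C)B$. The closed sets of $\operatorname{CoSp}(B)$ are exactly the sets $V_c(K)$ with $K\subseteq B$ an ideal. It therefore suffices to show that each preimage $F^{-1}(V_c(K))$ is of the form $V_c(I)$ for some ideal $I\subseteq A$. The natural candidate is the contraction $I=f^{-1}(K)$, which is an ideal of $A$.

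The key identity I would prove is
\[
F^{-1}(V_c(K))=V_c(f^{-1}(K)).
\]
Let $C\in\operatorname{CoSp}(A)$. Then $F(C)\in V_c(K)$ means $f(C)B\subseteq K$. Since $K$ is an ideal of $B$, this holds if and only if $f(C)\subseteq K$, because $f(C)B$ is the smallest ideal of $B$ containing $f(C)$. In turn, $f(C)\subseteq K$ holds if and only if $C\subseteq f^{-1}(K)$, that is, $C\in V_c(f^{-1}(K))$. Both directions are immediate from the definitions, and Theorem~\ref{thm:image-coprime-epi} guarantees that $F$ lands in $\operatorname{CoSp}(B)$, so the preimage is meaningful.

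This shows that preimages of closed sets are closed, so $F$ is continuous. I expect no real obstacle. The only point to check is the equivalence $f(C)B\subseteq K\iff f(C)\subseteq K$, which holds because $K$ is closed under multiplication by $B$. In particular, the epimorphism and non-annihilation hypotheses are needed only for $F$ to be well defined, not for continuity itself.
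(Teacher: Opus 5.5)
Your proof is correct and matches the paper's: both establish $\operatorname{CoSp}(f)^{-1}(V_c(K))=V_c(f^{-1}(K))$ through the chain $f(C)B\subseteq K\iff C\subseteq f^{-1}(K)$. You also spell out the intermediate step $f(C)B\subseteq K\iff f(C)\subseteq K$, which the paper leaves implicit.
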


\begin{proof}
Let $I\subseteq B$ be an ideal. We claim that
\[
\operatorname{CoSp}(f)^{-1}(V_c(I))=V_c(f^{-1}(I)).
\]
For $C\in\operatorname{CoSp}(A)$,
\[
C\in \operatorname{CoSp}(f)^{-1}(V_c(I))
\iff f(C)B\subseteq I
\iff C\subseteq f^{-1}(I)
\iff C\in V_c(f^{-1}(I)).
\]
Thus the inverse image of every closed set is closed, so $\operatorname{CoSp}(f)$ is continuous.
\end{proof}

The assignment
\[
A\longmapsto \operatorname{CoSp}(A),\qquad f\longmapsto \operatorname{CoSp}(f)
\]
defines a covariant functor from the category of commutative rings with epimorphisms that do not annihilate coprime ideals to the category of topological spaces with continuous maps. Indeed, the identity map induces the identity on spectra. If $f\colon A\to B$ and $g\colon B\to C$ are epimorphisms that do not annihilate coprime ideals, then $g\circ f$ is an epimorphism. For $C\in\operatorname{CoSp}(A)$, the ideal $f(C)B$ is coprime in $B$, and since $g$ does not annihilate coprime ideals, $g(f(C)B)C$ is coprime in $C$. But $g(f(C)B)C=(g\circ f)(C)C$, so the composition is preserved.

\section{Dual Rings}
\label{sec:dual-rings}

In this section we focus on rings for which the double annihilator of every ideal returns the ideal itself. Such rings are called dual rings, and they provide a setting where the coprime spectrum and the prime spectrum are homeomorphic.

\begin{definition}\label{def:dual-ring}
A ring $A$ is called a \emph{dual ring} if
\[
\ann(\ann(I)) = I
\]
for every ideal $I \subseteq A$. The classical references for dual rings are \cite{hall1939type}, \cite{baer1943rings}, and \cite{kaplansky1948dual}.
\end{definition}

The notion of a dual ring is intimately related to the classical concept of a quasi-Frobenius ring. Recall that a commutative ring $A$ is called \emph{quasi-Frobenius} if it is Artinian and self-injective. A fundamental characterization, due to Morita and others, states that a commutative ring is quasi-Frobenius if and only if it is Noetherian and satisfies the double annihilator condition $\ann(\ann(I)) = I$ for every ideal $I$ of $A$ (see \cite{lam1999lectures}). In particular, every commutative quasi-Frobenius ring is a dual ring in the sense of Definition~\ref{def:dual-ring}. Conversely, a commutative Noetherian dual ring is quasi-Frobenius, so the two notions coincide precisely for commutative Noetherian rings.

A rich source of examples arises from group algebras. Let $K$ be a field and $G$ a finite abelian group. The group algebra $K[G]$ is commutative, finite-dimensional over $K$, and it is well known that every group algebra of a finite group over a field is a Frobenius algebra \cite{curtis1962representation}. Since every Frobenius algebra is quasi-Frobenius, it follows that $K[G]$ is a dual ring. In these rings, the annihilator is typically non-zero, and the coproduct $[I:J]$ is highly non-trivial, providing a fertile ground for the theory developed here. More generally, a group ring $A[G]$ is quasi-Frobenius if and only if $G$ is finite and $A$ is quasi-Frobenius; in the commutative case, this reduces to $G$ abelian and $A$ commutative quasi-Frobenius \cite{nicholson2003quasi}.

We now prove three fundamental results that hold in any dual ring.

\begin{proposition}\label{prop:dual-annihilator-product}
Let $A$ be a dual ring and let $I,J\subseteq A$ be ideals. Then
\[
\ann(IJ) = [\ann(I):\ann(J)].
\]
Consequently, taking annihilators on both sides and using the dual property, we also have
\[
IJ = \ann\!\big([\ann(I):\ann(J)]\big).
\]
\end{proposition}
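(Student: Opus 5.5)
The plan is to unwind the definition of the coproduct on the right-hand side and compare it with the identity for $\ann(IJ)$ recorded in Proposition~\ref{pro:annihilator}. By Definition~\ref{def:coproduct},
\[
[\ann(I):\ann(J)] = \bigl(\ann(J):\ann(\ann(I))\bigr).
\]
Since $A$ is a dual ring, $\ann(\ann(I))=I$. The right-hand side therefore simplifies to $(\ann(J):I)$.

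Next I invoke Proposition~\ref{pro:annihilator}(5), which gives $\ann(IJ)=(\ann(J):I)$. Combining the two equalities yields $\ann(IJ)=[\ann(I):\ann(J)]$, which is the first assertion. It is worth recording that only the double annihilator identity for the single ideal $I$ is used. So the inclusion $\ann(IJ)\subseteq[\ann(I):\ann(J)]$ in fact holds in any ring: by Proposition~\ref{pro:annihilator}(3), $I\subseteq\ann(\ann(I))$, and the ideal quotient is order-reversing in its second argument. The dual hypothesis supplies the reverse inclusion.

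For the consequence, I apply $\ann$ to both sides of the established equality. This gives $\ann(\ann(IJ))=\ann([\ann(I):\ann(J)])$. Since $IJ$ is an ideal of the dual ring $A$, the left side equals $IJ$.

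The only point requiring care, and the closest thing to an obstacle, is getting the argument order of the coproduct right. One must check that $[\ann(I):\ann(J)]$ annihilates against $\ann(\ann(I))$ in the first slot, rather than the second, so that the dual property collapses it to $I$ and matches $(\ann(J):I)$ exactly. Everything else is a direct citation of Proposition~\ref{pro:annihilator}.
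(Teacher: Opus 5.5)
Your main argument is correct and is exactly the paper's proof. You unfold $[\ann(I):\ann(J)]=(\ann(J):\ann(\ann(I)))$, collapse $\ann(\ann(I))=I$ by duality, and cite Proposition~\ref{pro:annihilator}(5). You then apply $\ann$ once more and use $\ann(\ann(IJ))=IJ$ for the second identity.

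Your aside, however, has the inclusion backwards. From $I\subseteq\ann(\ann(I))$ and order-reversal in the second argument you get
\[
[\ann(I):\ann(J)]=(\ann(J):\ann(\ann(I)))\subseteq(\ann(J):I)=\ann(IJ),
\]
which is the opposite of the inclusion $\ann(IJ)\subseteq[\ann(I):\ann(J)]$ that you claimed.

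In fact the inclusion you stated also holds in every commutative ring. If $a\in\ann(IJ)$, then $aJ\subseteq\ann(I)$, so $aJ\cdot\ann(\ann(I))=0$, i.e.\ $a\in(\ann(J):\ann(\ann(I)))$. So the first identity needs no duality at all, and the dual hypothesis enters only in the second identity, through $\ann(\ann(IJ))=IJ$. Your remark that ``the dual hypothesis supplies the reverse inclusion'' should therefore be dropped.
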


\begin{proof}
Using Proposition~\ref{pro:annihilator}.(5) and the definition of the coproduct, we compute:
\[
[\ann(I):\ann(J)] = (\ann(J) : \ann(\ann(I))).
\]
Since $A$ is dual, $\ann(\ann(I)) = I$. Hence
\[
[\ann(I):\ann(J)] = (\ann(J) : I).
\]
Again by Proposition~\ref{pro:annihilator}.(5), $(\ann(J) : I) = \ann(IJ)$. Therefore,
\[
[\ann(I):\ann(J)] = \ann(IJ),
\]
as claimed. The second identity follows by applying the double annihilator to both sides.
\end{proof}

The next result shows that, in a dual ring, the coprimality of an ideal is equivalent to the primality of its annihilator. This establishes a perfect duality between the two spectra.

\begin{theorem}\label{thm:dual-coprime-prime}
Let $A$ be a dual ring and $C \subseteq A$ a non-zero ideal. Then $C$ is coprime if and only if $\ann(C)$ is a prime ideal.
\end{theorem}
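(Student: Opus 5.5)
The plan is to exploit the fact that, in a dual ring, $\ann$ is an inclusion-reversing involution on the lattice of ideals. Under this involution the coprime condition should translate, statement for statement, into the prime condition for $\ann(C)$. The tools are Proposition~\ref{prop:dual-annihilator-product} and one elementary Galois-connection fact valid in any commutative ring: for ideals $C,K$,
\[
C\subseteq \ann(K)\iff CK=0\iff K\subseteq \ann(C).
\]

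First, reparametrise the pairs of ideals occurring in the definition of coprimality. Since $A$ is dual, every ideal $I$ equals $\ann(I')$ with $I'=\ann(I)$. Conversely, every ideal $I'$ arises this way from $I=\ann(I')$. So quantifying over all pairs $I,J$ is the same as quantifying over all pairs $\ann(I'),\ann(J')$ with $I',J'$ arbitrary ideals.

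For such a pair, Proposition~\ref{prop:dual-annihilator-product} gives
\[
[\ann(I'):\ann(J')]=\ann(I'J').
\]
Applying the Galois-connection fact to each of the three containments gives
\[
C\subseteq[\ann(I'):\ann(J')]\iff I'J'\subseteq\ann(C),\qquad C\subseteq\ann(I')\iff I'\subseteq\ann(C),
\]
and likewise $C\subseteq\ann(J')\iff J'\subseteq\ann(C)$. Hence the coprime condition on $C$ is equivalent to the statement that, for all ideals $I',J'$, the inclusion $I'J'\subseteq\ann(C)$ forces $I'\subseteq\ann(C)$ or $J'\subseteq\ann(C)$. This is exactly the ideal-theoretic form of primality of $\ann(C)$.

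It remains to match the non-degeneracy conditions. Coprime ideals are required to be non-zero, while prime ideals are required to be proper. We have $\ann(C)=A$ if and only if $1\cdot C=0$, that is, if and only if $C=0$. So, under the standing hypothesis $C\neq 0$, the ideal $\ann(C)$ is automatically proper. Combined with the equivalence above, this proves both directions.

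The only delicate step is the reparametrisation. The equivalence of the two conditions requires that \emph{every} ideal $I$ be of the form $\ann(I')$, and this is precisely where the dual hypothesis is used, through the identity $\ann(\ann(I))=I$. Without it, the argument would only test primality of $\ann(C)$ against annihilator ideals.
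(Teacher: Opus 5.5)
Your proof is correct and is essentially the paper's argument: the paper also substitutes $I=\ann(P)$, $J=\ann(Q)$ in one direction (and passes to $\ann(I),\ann(J)$ in the other), translating containments through the annihilator involution, though it proves the two implications separately with inline computations such as $[I:J]=(J:P)$ and $\ann(C\ann(I))=(\ann(C):\ann(I))$. Your version folds both directions into a single chain of equivalences via Proposition~\ref{prop:dual-annihilator-product} and the Galois connection $C\subseteq\ann(K)\iff K\subseteq\ann(C)$, which is a tidier presentation of the same idea.
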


\begin{proof}
$(\Rightarrow)$ Suppose $C$ is coprime. Since $C \neq 0$, we have $\ann(C) \neq A$, so $\ann(C)$ is a proper ideal. Let $P, Q \subseteq A$ such that
\[
PQ \subseteq \ann(C).
\]
Set $I = \ann(P)$ and $J = \ann(Q)$. Since $A$ is dual, $\ann(I) = P$ and $\ann(J) = Q$.

We claim that $C \subseteq [I:J]$. Indeed,
\[
[I:J] = (J:\ann(I)) = (J:P).
\]
Thus it suffices to show $CP \subseteq J = \ann(Q)$. But
\[
C P Q \subseteq C \cdot \ann(C) = 0,
\]
so $CP \subseteq \ann(Q) = J$. Therefore $C \subseteq (J:P) = [I:J]$.

Since $C$ is coprime, we have
\[
C \subseteq I \quad \text{or} \quad C \subseteq J.
\]
If $C \subseteq I = \ann(P)$, taking annihilators and using duality gives
\[
P = \ann(\ann(P)) = \ann(I) \subseteq \ann(C).
\]
Similarly, if $C \subseteq J = \ann(Q)$, then $Q \subseteq \ann(C)$. Hence
\[
P \subseteq \ann(C) \quad \text{or} \quad Q \subseteq \ann(C).
\]
As $P$ and $Q$ were arbitrary, $\ann(C)$ is prime.

$(\Leftarrow)$ Conversely, suppose $P := \ann(C)$ is a prime ideal. Let $I, J \subseteq A$ be ideals such that
\[
C \subseteq [I:J].
\]
By definition, $C \cdot \ann(I) \subseteq J$. Taking annihilators on both sides and using Proposition~\ref{pro:annihilator}.(5), we obtain
\[
\ann(J) \subseteq \ann(C \cdot \ann(I)) = (\ann(C):\ann(I)) = (P:\ann(I)).
\]
The inclusion $\ann(J) \subseteq (P:\ann(I))$ is equivalent to
\[
\ann(J) \cdot \ann(I) \subseteq P.
\]
Since $P$ is prime, it follows that
\[
\ann(I) \subseteq P \quad \text{or} \quad \ann(J) \subseteq P.
\]
If $\ann(I) \subseteq P = \ann(C)$, taking annihilators and using duality gives
\[
C = \ann(P) \subseteq \ann(\ann(I)) = I.
\]
Similarly, if $\ann(J) \subseteq P$, then $C \subseteq J$. Therefore $C \subseteq I$ or $C \subseteq J$, which proves that $C$ is coprime.
\end{proof}

The following corollary is an immediate consequence of the theorem and shows that the annihilator map gives a bijection between the coprime spectrum and the prime spectrum.

\begin{corollary}\label{cor:dual-coprime-prime-bijection}
Let $A$ be a dual ring. The map
\[
\ann\colon \operatorname{CoSp}(A) \longrightarrow \operatorname{Spec}(A), \qquad C \longmapsto \ann(C)
\]
is a bijection, with inverse given by $P \mapsto \ann(P)$.
\end{corollary}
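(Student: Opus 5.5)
The plan is to derive this corollary directly from Theorem~\ref{thm:dual-coprime-prime} together with the double annihilator identity $\ann(\ann(I))=I$ that defines a dual ring. There are three things to check: the map is well defined, the proposed inverse is well defined, and the two maps are mutually inverse.

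\emph{Well-definedness of $\ann$.} Let $C\in\operatorname{CoSp}(A)$. Then $C$ is a non-zero ideal, so Theorem~\ref{thm:dual-coprime-prime} says $\ann(C)$ is prime. Hence $\ann(C)\in\operatorname{Spec}(A)$.

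\emph{Well-definedness of $P\mapsto\ann(P)$.} Let $P\in\operatorname{Spec}(A)$ and set $C:=\ann(P)$.
\begin{enumerate}
  \item First show $C\neq 0$. If $C=0$, duality would give $P=\ann(\ann(P))=\ann(0)=A$. This contradicts $P$ being proper.
  \item Next, by duality $\ann(C)=\ann(\ann(P))=P$, which is prime.
  \item Applying the converse direction of Theorem~\ref{thm:dual-coprime-prime} to the non-zero ideal $C$, we conclude that $C$ is coprime.
\end{enumerate}

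\emph{Mutually inverse.} Both compositions reduce to the dual-ring identity: $\ann(\ann(C))=C$ for $C\in\operatorname{CoSp}(A)$, and $\ann(\ann(P))=P$ for $P\in\operatorname{Spec}(A)$. So each map is a two-sided inverse of the other, and $\ann$ is a bijection.

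The only point needing any care is the non-vanishing of $\ann(P)$, since Theorem~\ref{thm:dual-coprime-prime} applies only to non-zero ideals. That is handled by the short argument in step (1); everything else is bookkeeping.
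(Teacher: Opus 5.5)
Your proposal is correct and is exactly the argument the paper has in mind: the paper gives no written proof, only the remark that the corollary is an immediate consequence of Theorem~\ref{thm:dual-coprime-prime}. Your explicit check that $\ann(P)\neq 0$ (since otherwise $P=\ann(\ann(P))=\ann(0)=A$) supplies the one detail the paper leaves implicit.
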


Finally, we prove that this bijection is actually a homeomorphism between the topological spaces $\operatorname{CoSp}(A)$ and $\operatorname{Spec}(A)$, where the latter is endowed with the Zariski topology and the former with the topology defined in Section~\ref{sec:coprime-spectrum}.

\begin{theorem}\label{thm:dual-homeomorphism}
Let $A$ be a commutative dual ring. Then the map
\[
\ann\colon \operatorname{CoSp}(A) \longrightarrow \operatorname{Spec}(A), \qquad C \longmapsto \ann(C)
\]
is a homeomorphism.
\end{theorem}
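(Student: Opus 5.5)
Bijectivity is already given by Corollary~\ref{cor:dual-coprime-prime-bijection}, so the plan is to show that $\ann$ sends closed sets to closed sets in both directions. To do this I will compute explicitly how $\ann$ and its inverse act on the basic closed sets. The key tool is the order-reversing bijection $I\mapsto\ann(I)$ on the ideal lattice, whose inverse is again $\ann$ because $A$ is dual.

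First, fix an ideal $I$ and compute the image of $V_c(I)$. Take $C\in\operatorname{CoSp}(A)$. If $C\subseteq I$, then Proposition~\ref{pro:annihilator}.(2) gives $\ann(I)\subseteq\ann(C)$. Conversely, if $\ann(I)\subseteq\ann(C)$, applying $\ann$ and duality gives $C=\ann(\ann(C))\subseteq\ann(\ann(I))=I$. Hence
\[
\ann\bigl(V_c(I)\bigr)=\{P\in\operatorname{Spec}(A)\mid \ann(I)\subseteq P\}=V(\ann(I)),
\]
where we use that every prime $P$ equals $\ann(C)$ for the coprime ideal $C=\ann(P)$, by Theorem~\ref{thm:dual-coprime-prime} and Corollary~\ref{cor:dual-coprime-prime-bijection}. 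So $\ann$ is a closed map.

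Second, for continuity, take a Zariski closed set $V(J)$ with $J$ an ideal. The same equivalence, applied with $I=\ann(J)$ (so that $\ann(I)=J$ by duality), shows that
\[
\ann^{-1}\bigl(V(J)\bigr)=\{C\mid J\subseteq\ann(C)\}=\{C\mid C\subseteq\ann(J)\}=V_c(\ann(J)).
\]
This set is closed, so $\ann$ is continuous.

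A continuous, closed bijection is a homeomorphism, which completes the argument. I do not expect a real obstacle. The only points needing care are, first, that every closed set of $\operatorname{CoSp}(A)$ has the form $V_c(I)$, which holds by definition of the topology, and second, that the surjectivity statement identifies all primes containing $\ann(I)$ as annihilators of coprime ideals. Both of these come directly from earlier results.
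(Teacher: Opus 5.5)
Your proposal is correct and follows essentially the paper's proof: both use the bijection of Corollary~\ref{cor:dual-coprime-prime-bijection} and the duality equivalence $C\subseteq I \iff \ann(I)\subseteq \ann(C)$ to obtain $\ann^{-1}(V(J))=V_c(\ann(J))$ and $\ann(V_c(I))=V(\ann(I))$. Your statement that ``$\ann$ is a closed map'' is the same computation as the paper's check that the inverse $P\mapsto\ann(P)$ is continuous.
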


\begin{proof}
By Corollary~\ref{cor:dual-coprime-prime-bijection}, the map is a bijection. It remains to show that both $\ann$ and its inverse are continuous.

First, we show that $\ann$ is continuous. Let $V(P) \subseteq \operatorname{Spec}(A)$ be a closed set, where $P \subseteq A$ is an ideal. Then
\[
\ann^{-1}(V(P)) = \{\, C \in \operatorname{CoSp}(A) \mid \ann(C) \in V(P) \,\}.
\]
Now $\ann(C) \in V(P)$ means $P \subseteq \ann(C)$. Since $A$ is a dual ring, taking annihilators on both sides yields
\[
P \subseteq \ann(C) \iff \ann(\ann(C)) \subseteq \ann(P) \iff C \subseteq \ann(P).
\]
Therefore,
\[
\ann^{-1}(V(P)) = \{\, C \in \operatorname{CoSp}(A) \mid C \subseteq \ann(P) \,\} = V_c(\ann(P)),
\]
which is a closed set in $\operatorname{CoSp}(A)$. Hence $\ann$ is continuous.

Now we show that the inverse map $\ann^{-1}\colon \operatorname{Spec}(A) \to \operatorname{CoSp}(A)$, given by $P \mapsto \ann(P)$, is continuous. Let $V_c(I) \subseteq \operatorname{CoSp}(A)$ be a closed set, where $I \subseteq A$. Then
\[
(\ann^{-1})^{-1}(V_c(I)) = \{\, P \in \operatorname{Spec}(A) \mid \ann(P) \in V_c(I) \,\}.
\]
Now $\ann(P) \in V_c(I)$ means $\ann(P) \subseteq I$. Since $A$ is a dual ring, taking annihilators gives
\[
\ann(P) \subseteq I \iff \ann(I) \subseteq \ann(\ann(P)) \iff \ann(I) \subseteq P.
\]
Therefore,
\[
(\ann^{-1})^{-1}(V_c(I)) = \{\, P \in \operatorname{Spec}(A) \mid \ann(I) \subseteq P \,\} = V(\ann(I)),
\]
which is a closed set in $\operatorname{Spec}(A)$. Hence the inverse map is continuous.

Thus $\ann$ is a bijective continuous map with continuous inverse, so it is a homeomorphism.
\end{proof}

Theorem~\ref{thm:dual-homeomorphism} establishes a strong duality between the coprime spectrum and the prime spectrum in dual rings. In particular, the topological spaces $\operatorname{CoSp}(A)$ and $\operatorname{Spec}(A)$ are homeomorphic, so any topological property of the prime spectrum (such as compactness, irreducibility, or the structure of closed sets) transfers directly to the coprime spectrum in the dual ring setting. This generalises the classical correspondence between minimal ideals and maximal ideals in quasi-Frobenius rings.


\bibliographystyle{amsplain}
\bibliography{biblio}

\end{document}